\newcommand{\llIf}[2]{{\let\par\relax\lIf{#1}{#2}}}
\newcommand{\llElse}[1]{{\let\par\relax\lElse{#1}}}
\newcommand{\from}{\leftarrow}
\newcommand{\true}{\textbf{true}}
\newcommand{\Break}{\textbf{Break}}
\newcommand{\false}{\textbf{false}}
\newcommand{\argmin}{\mathop{\text{argmin}}}
\newcommand{\Real}{{\mathbb{R}}}
\begin{document}

%%
%% The "title" command has an optional parameter,
%% allowing the author to define a "short title" to be used in page headers.
\title{A Model-Based Derivative-Free Optimization Algorithm for Partially Separable Problems}

%%
%% The "author" command and its associated commands are used to define
%% the authors and their affiliations.
%% Of note is the shared affiliation of the first two authors, and the
%% "authornote" and "authornotemark" commands
%% used to denote shared contribution to the research.
\author{Yichuan Liu}
% \authornote{Both authors contributed equally to this research.}
% \email{yichuanliu22@m.fudan.edu.cn}
\email{liuyichuan2020@outlook.com}
\orcid{0009-0001-2692-9454}
\affiliation{%
  \institution{School of Mathematical Sciences, Fudan University}
  \city{Shanghai}
  \country{China}
  \postcode{200433}
}
\author{Yingzhou Li}
\email{yingzhouli@fudan.edu.cn}
\orcid{0000-0003-1852-3750}
\affiliation{%
  \institution{School of Mathematical Sciences, Fudan University}
  \city{Shanghai}
  \country{China}
  \postcode{200433}
}
\affiliation{
  \institution{Shanghai Key Laboratory for Contemporary Applied Mathematics}
  \city{Shanghai}
  \country{China}
  \postcode{200433}
}
\affiliation{
  \institution{Key Laboratory of Computational Physical Sciences (MOE)}
  \city{Shanghai}
  \country{China}
  \postcode{200433}
}
\authornote{This work is supported by the National Natural Science
Foundation of China (NSFC) under grant numbers 12271109, the Science and
Technology Commission of Shanghai Municipality (STCSM) under grant numbers
22TQ017 and 24DP2600100, and the Shanghai Institute for Mathematics and
Interdisciplinary Sciences (SIMIS) under grant number SIMIS-ID-2024-(CN).}

%%
%% By default, the full list of authors will be used in the page
%% headers. Often, this list is too long, and will overlap
%% other information printed in the page headers. This command allows
%% the author to define a more concise list
%% of authors' names for this purpose.
\renewcommand{\shortauthors}{Y. Liu et al.}

%%
%% The abstract is a short summary of the work to be presented in the
%% article.
\begin{abstract}
    We propose \texttt{UPOQA}, a derivative-free optimization algorithm for partially separable unconstrained problems, leveraging quadratic interpolation and a structured trust-region framework. By decomposing the objective into element functions, \texttt{UPOQA} constructs underdetermined element models and solves subproblems efficiently via a modified projected gradient method. Innovations include an approximate projection operator for structured trust regions, improved management of elemental radii and models, a starting point search mechanism, and support for hybrid black-white-box optimization, etc. Numerical experiments on 85 \texttt{CUTEst} problems demonstrate that \texttt{UPOQA} can significantly reduce the number of function evaluations. To quantify the impact of exploiting partial separability, we introduce the speed-up profile to further evaluate the acceleration effect. Results show that the speed-up of \texttt{UPOQA} over baselines is less significant in low-precision scenarios but becomes more pronounced in high-precision scenarios. Applications to quantum variational problems further validate its practical utility.
\end{abstract}

%%
%% The code below is generated by the tool at http://dl.acm.org/ccs.cfm.
%% Please copy and paste the code instead of the example below.
%%
\begin{CCSXML}
    <ccs2012>
        <concept>
            <concept_id>10002950.10003705.10003707</concept_id>
            <concept_desc>Mathematics of computing~Solvers</concept_desc>
            <concept_significance>500</concept_significance>
        </concept>
        <concept>
            <concept_id>10002950.10003714.10003716.10011138.10011140</concept_id>
            <concept_desc>Mathematics of computing~Nonconvex optimization</concept_desc>
            <concept_significance>500</concept_significance>
        </concept>
    </ccs2012>
\end{CCSXML}

\ccsdesc[500]{Mathematics of computing~Solvers}
\ccsdesc[500]{Mathematics of computing~Nonconvex optimization}

%%
%% Keywords. The author(s) should pick words that accurately describe
%% the work being presented. Separate the keywords with commas.
\keywords{Partially separable problems, derivative-free optimization, trust region methods, interpolation model}

% \received{20 February 2007}
% \received[revised]{12 March 2009}
% \received[accepted]{5 June 2009}

%%
%% This command processes the author and affiliation and title
%% information and builds the first part of the formatted document.
\maketitle

\section{Introduction}

Derivative-free optimization (DFO)~\cite{Intro2DFO} methods, also known as black-box optimization (BBO) or zero-order optimization methods, are designed to find the minimum or maximum of an objective function when its derivatives are not available. For complex problems, the objective function may not be expressed as a closed-form analytical expression but rather as a black box that only allows function evaluations. Examples include cases where function values are obtained from simulation or experimental observations, or from the output of software packages with unknown internal implementations. Such objective functions often incur high evaluation costs, yield noisy values, or both.

DFO algorithms can be broadly categorized into five classes. The first class comprises \emph{direct search} and \emph{pattern search} methods~\cite{Intro2DirectSearch, Intro2PatternSearch}, which select the next iteration point by comparing the relative merits of function values~\cite{DirectSearchDef}. These methods typically employ specific geometric patterns for point selection. They impose minimal requirements on the properties of the objective function, are computationally simple, and less prone to local minima, but often exhibit slow convergence. The second class consists of \emph{line-search-based} methods, with Powell's method~\cite{PowellMethod} being a representative example that performs line searches along conjugate direction sets. The third class includes \emph{heuristic search} methods, which explore the solution space following specific heuristic rules to find approximate optima. Common algorithms in this category are simulated annealing~\cite{SA} and evolutionary algorithms~\cite{ISRES, ESCH}, with \texttt{CMA-ES} being a notable example~\cite{CMA-ES}. The fourth class is \emph{model-based derivative-free optimization}~\cite{Intro2MBDFO, ConnMBDFO, PDFO, pybobyqa}. These methods construct a surrogate model through interpolation or regression using known function values, approximating the objective function locally. The model is then optimized and updated within a trust-region framework to solve the original problem. Examples include Powell's \texttt{COBYLA}~\cite{COBYLA} (based on first-order models) and \texttt{BOBYQA}~\cite{BOBYQA}, \texttt{NEWUOA}~\cite{NEWUOAsoftware} (based on second-order models), etc. Surrogate models may also adopt other forms, such as radial basis functions~\cite{RadialBasisDFO} and moving ridge functions (e.g., \texttt{OMoRF}~\cite{OMoRF}). The fifth class comprises \emph{finite-difference-based} methods\cite{FDLongIntroAndTest}, which approximate the true gradient using numerical differentiation and then employ gradient-based optimization algorithms.

In DFO, the primary challenge lies in the lack of structural knowledge about the problem. When the problem lacks smoothness, convexity, or is subject to significant noise, many DFO methods essentially sample the parameter space according to certain patterns. Such approaches are prone to the curse of dimensionality when dealing with large-scale problems. Even if smoothness is assumed and surrogate models are constructed to accelerate convergence, the computational cost of solving these problems may still be prohibitive~\cite{CMA-ESCurseDim}. To address this, many researchers have considered structured DFO problems, such as nonlinear least squares~\cite{powell1965LSDFOmethod, pybobyqa, DFLS, DFO-LS}, problems with sparse Hessian matrices~\cite{DFOforSparseHess, RN93, CMA-ESCurseDim} and partially separable problems~\cite{PSRandomDirectSearch, PSMeshPatternSearch, PS-CMAES, PSDFOThesis, PSNaiveDFO, PSParticleSwarm}. Exploiting these structures can significantly reduce the number of function evaluations required for convergence. 

This paper focuses particularly on unconstrained partially separable problems~\cite{PSorigin}, which take the form:  
\begin{equation}\label{eq:PSProb}  
    \min_{x\in \Real^n} f(x) = \sum_{i=1}^{q} f_i(x),  
\end{equation}  
where \( f_1, \ldots, f_q \) are referred to as \emph{element functions} (or simply \emph{elements}). For each \( i = 1, \ldots, q \), there exists a subspace \( \mathcal{N}_i \subset \mathbb{R}^n \) such that for any \( w \in \mathcal{N}_i \) and \( x \in \mathbb{R}^n \), the following holds:  
\begin{equation*}  
    f_i(x + w) = f_i(x).  
\end{equation*}  

In contrast to the elements, \( f \) is called the \emph{overall function}. Let \( \mathcal{R}_i \triangleq \mathcal{N}_i^\perp \), and denote \( n_i \triangleq \dim \mathcal{R}_i \) as the \emph{elemental dimension} of \( f_i \). Each \( f_i \) is essentially a function defined on \( \mathcal{R}_i \). The corresponding projection is denoted as \( P_{\mathcal{R}_i}\colon \mathbb{R}^n \to \mathcal{R}_i \), ensuring that for any \( x \in \mathbb{R}^n \), \( f_i(x) \equiv f_i(P_{\mathcal{R}_i}(x)) \). Without loss of generality, we assume \( \operatorname{span}\left(\cup_{i = 1}^q \mathcal{R}_i\right) = \mathbb{R}^n \), as otherwise, redundant variables would not affect the value of \( f \).

Partially separable structures frequently arise in optimal control problems, the discretization of partial differential equations (PDEs) via methods like finite elements, and of other variational problems~\cite{PSPartitionedVMU}. In optimal control, problems often consist of multiple loosely coupled subsystems distributed across space and time. Similarly, problems stemming from domain decomposition techniques applied to PDE discretizations exhibit partial separability, where each variable primarily interacts with its neighboring regions. In quantum computing, many variational quantum problems also demonstrate such structures, with each subfunction represented by a quantum circuit~\cite{QOMM, QOMM2}.

In practice, the more commonly used concept is \emph{coordinate partial separability}, which further requires that each \( f_i \) depends only on a subset of variables \( \left\{x_j \mid j \in \mathcal{I}_i = \{j_1, \ldots, j_{n_i}\} \right\} \), referred to as the \emph{elemental variables} of \( f_i \). In this case, \( \mathcal{R}_i = \operatorname{span}(\{e_j \mid j \in \mathcal{I}_i\}) \), and we denote $P_{\mathcal{R}_i}(x)$ as $x^{\mathcal{I}_i}$. Since \( f_i(x) \) depends solely on \( x^{\mathcal{I}_i} \), we will, for simplicity, no longer distinguish between the notations \( f_i(x^{\mathcal{I}_i}) \) and \( f_i(x) \) in subsequent discussions. Due to its prevalence and algorithmic convenience, unless explicitly stated otherwise, all references to partial separability in this paper refer to coordinate partial separability. We further assume that each element function can be evaluated independently. This assumption aligns with the practical context of most problems, as when each \( f_i \) represents the contribution of a subsystem, these contributions should naturally be computable separately.

The term \emph{partial} in partial separability refers to the fact that the variables on which different element functions depend may overlap. The two extremes of this property are when all elements depend on completely different variables, such as $f(x) = \sum_{i=1}^q f_i(x_i)$, and when all elements depend on all variables, i.e., $f(x) = \sum_{i=1}^q f_i(x)$. However, most real-world problems lie between these two cases~\cite{TestPSProb1, PSforHisMatch, TestPSProb2}. In the \texttt{LANCELOT} package~\cite{LANCELOT}, the concept of partial separability is further generalized to \emph{group partial separability} and used to define the \emph{standard input format} (SIF) for test problems. Moreover, many common problem structures can be viewed as special cases of partially separable structures. Any twice continuously differentiable function with a sparse Hessian can be transformed into a partially separable form~\cite{PSorigin}, and nonlinear least squares problems $\min_{x} \sum_{i=1}^N r_i(x)^2$ are naturally partially separable.

The most obvious advantage of partial separability is that, given a trial point $x\in \Real^n$, one can always compute the value of any element function $f_i(x)$ individually without evaluating the full $f(x)$. Conversely, even if $f(x)$ must be computed, all element values $f_1(x),\ldots,f_q(x)$ can be obtained simultaneously, thereby significantly increasing the amount of information available. Various methods have been developed to exploit partial separability. When derivatives are available, a common approach is to employ partitioned Hessian updating schemes~\cite{PSPartitionedVMU, PSPartitionedVMUAddi, PSPartitionedBFGS, PSPartitionedBFGSPara, PSPartitionedPSB} in quasi-Newton methods, along with techniques such as element merging and grouping~\cite{PSPartitionedEleMerge, PSIncreOpt}, element-wise preconditioning~\cite{PSElePreCond}, and parallelization~\cite{PSPartitionedBFGSPara} to reduce the computational cost of Newton steps or conjugate gradients. Conn et al. proposed a trust-region framework~\cite{PSConnTrustRegion} for solving partially separable problems, which was later extended by Shahabuddin~\cite{PSDFOThesis} with three algorithm variants based on different methods for solving the trust-region subproblems. Another approach is \emph{incremental optimization}~\cite{IncreOpt, PSIncreOpt}, where only a small subset of elements is optimized in each iteration. 

When derivatives are unavailable, existing approaches can be broadly categorized into \emph{bottom-up} and \emph{top-down} approaches, depending on whether they directly optimize the element functions or optimize the objective function. The \emph{bottom-up} approach primarily leverages the independent evaluability of elements to reconstruct information about the objective function from element values at relatively low cost, thereby solving problem (\ref{eq:PSProb}). A straightforward implementation evaluates elements at grid points in $\mathbb{R}^n$, e.g., $\{x=$ $\left.\sum_{i=1}^n c_i e_i \mid c_i \in \mathbb{Z}\right\}$, then assembles these evaluations into full function values. For instance, for ${f(x, y, z)}=f_1(x, y)+f_2(y, z)$, computing the values of $f_i$ at $\{0,1\}^2$ requires only 4 evaluations but yields all 8 values of $f$ over $\{0,1\}^3$. If elements are independent, evaluating each element $k$ times produces $k^q$ objective values from just $kq$ evaluations. Price and Toint~\cite{PSMeshPatternSearch} exploited this to develop a mesh-refinement-based direct search method that scales efficiently to thousands of dimensions. Another strategy is line-search-based, such as Porcelli and Toint's \texttt{BFO} method~\cite{PSRandomDirectSearch}. 

An alternative approach is \emph{top-down}, where the algorithm primarily extracts information from complete evaluations of $f$ to guide the optimization. Model-based methods often fall into this category, as the subproblems formed by surrogate models typically generate points without a clear geometric pattern, making it unlikely to reconstruct the objective value by computing only a few element values. Relevant work includes Bouzarkouna et al.'s \texttt{p-sep lmm-CMA} method~\cite{PS-CMAES} and Conn and Toint's \texttt{PSDFO} method~\cite{PSNaiveDFO}. The \texttt{p-sep lmm-CMA} method is a variant of \texttt{lmm-CMA}~\cite{IMM-CMA}, where a meta-model (a second-order polynomial regression model) is built for each element function to guide the ranking of data points in \texttt{lmm-CMA}. Meanwhile, \texttt{PSDFO} is an interpolation-based trust-region method where each element $f_i$ is approximated by a quadratic interpolation model $m_i$. In both methods, the primary purpose of modeling elements is to construct an \emph{overall model}  
\begin{equation}\label{eq:OverallModel}  
    m(x) = \sum_{i=1}^q m_i(x),  
\end{equation}  
which is then used as a single surrogate model in the underlying algorithm for general optimization. Consequently, the information available to the algorithm mainly stems from feedback on the full function $f$ or the overall model $m$. The benefits of the \emph{top-down} approach are relatively indirect. Compared to using a single surrogate model, employing multiple element models primarily reduces the total modeling cost, since evaluating $f$ once yields all values $f_1,\ldots,f_q$, enabling simultaneous updates to all models in a single iteration. 

Generally speaking, model-based methods exhibit superior computational efficiency compared to direct search and pattern search approaches. In pursuit of efficiency, we adopt a \emph{top-down} strategy and develop a trust-region algorithm that employs quadratic interpolation models to approximate element functions. The algorithm is named \emph{Unconstrained Partially-separable Optimization by Quadratic Approximation} (\texttt{UPOQA}). The key features of \texttt{UPOQA} include:

\begin{enumerate}
    \item For interpolation modeling, \texttt{UPOQA} utilizes the underdetermined quadratic model based on Powell's \emph{derivative-free symmetric Broyden update}~\cite{PowellLeastFrobeniusUpdate}, incorporating Powell's techniques~\cite{PowellUpdateInverseKKT,PowellDevNEWUOA} to enable efficient model construction and maintenance with low algebraic complexity;
    \item For trust-region management, \texttt{UPOQA} assigns independent trust-region radii to each element, implements a modified projected gradient method for solving trust-region subproblems within structured trust regions, improves upon the existing radius adjustment strategy~\cite{PSDFOThesis}, and introduces a selective update mechanism for element models based on obtained interpolation points;
    \item In terms of accessibility, \texttt{UPOQA} provides a ready-to-use Python implementation with various features to enhance flexibility and robustness, including hybrid black-white-box optimization and restart mechanism. The open-source package is released on Github\footnote{See https://github.com/Chitius/upoqa. Version 1.0.1 was used for all the testing below.} under the GNU General Public License.
\end{enumerate}

The rest of the paper is organized as follows. Section \ref{sec:Upoqa_frame} presents the overall framework of \texttt{UPOQA}, including the designed approximate projection operator for structured trust-region problems, the management of trust-region radius, the criteria for selective element model updates, starting point search mechanism, and optional features (restart and hybrid optimization). Section \ref{sec:upoqa_experiment} conducts comprehensive numerical experiments on \texttt{CUTEst} test problems extracted via the \texttt{S2MPJ} tool~\cite{s2mpj} and test cases from the quantum variational problem (\ref{eq:quant_prob_intro}) to validate the algorithm's effectiveness. Section \ref{sec:conclusion} concludes the work and discusses potential future research directions.

\section{The \texttt{UPOQA} Algorithm}\label{sec:Upoqa_frame}

The \texttt{UPOQA} algorithm is based on Powell's trust-region framework~\cite{NEWUOAsoftware,BOBYQA,PDFO} for solving problem (\ref{eq:PSProb}). For each element $f_i$, the algorithm maintains an interpolation set $\mathcal{Y}_i$ of size at least $O(n_i)$, along with an underdetermined quadratic interpolation model~\cite{PowellLeastFrobeniusUpdate} constructed from these points:
\begin{equation}\label{eq:QuadModelAtItk}
    m_{k,i}(x_k^{\mathcal{I}_i} + s) = c_{k,i} + g_{k,i}^\top s + \frac{1}{2} s^\top H_{k,i} s,\quad s \in \Real^{n_i}.
\end{equation}
We refer to $m_{k,1}, \ldots, m_{k,q}$ as \emph{element models}, where $c_{k,i} \in \Real$, $g_{k,i} \in \Real^{n_i}$, and $H_{k,i} \in \Real^{n_i \times n_i}$ is a symmetric matrix. The model $m_{k,i}$ is expected to sufficiently approximate the objective function within a trust region of radius $\Delta_{k,i}$:
\begin{equation*}
    \mathcal{B}_i(\Delta_{k,i}) = \left\{x_k^{\mathcal{I}_i} + s\mid s\in \Real^{n_i},\ \|s\|_2 \leq \Delta_{k,i}\right\}.
\end{equation*}
Moreover, each model is equipped with its own trust-region radius  $\{\Delta_{k,i}\}_{i=1}^q$ to account for their varying reliability due to different accuracies.

During the execution of \texttt{UPOQA}, as the interpolation sets are updated, the element models are also updated by the \emph{derivative-free symmetric Broyden update}~\cite{PowellLeastFrobeniusUpdate}, which has been widely employed by Powell in algorithms such as \texttt{NEWUOA}~\cite{NEWUOAsoftware} and \texttt{BOBYQA}~\cite{BOBYQA}. It allows the algorithm to use only $O(n_i)$ interpolation points per model while updating the model with $O(n_i^2)$ algebraic complexity~\cite{PowellUpdateInverseKKT,PowellDevNEWUOA}. This surrogate model is widely recognized as an effective second-order approach  in derivative-free methods based on interpolation models and trust regions~\cite{Intro2DFO}.

Our work is tailored for partially separable problems. When the number of elements exceeds one, two key issues must be addressed. First, how to solve the resulting structured trust-region subproblem:  
\begin{eqnarray}\label{eq:PSTrustRegionProb0}  
    &\min\limits_{s\in \Real^n}\ & m_k(x_k + s),\\  
    &\text{s.t.}\ &  
    \|s^{\mathcal{I}_i}\|_2 \leq \Delta_{k,i},\quad i = 1,2,\ldots, q, \nonumber  
\end{eqnarray}  
where \( m_k \triangleq \sum_{i=1}^{q} m_{k,i} \) is the \emph{overall model} assembled from the element models \( \{m_{k,i}\}_{i=1}^q \). Second, how to evaluate the contribution of each element model to the reduction of the objective value in each iteration and accordingly adjust \( \{\Delta_{k,i}\}_{i=1}^q \). For the first issue, we employ a modified projected gradient method based on an approximate projection operator, which will be detailed in Section \ref{sec:structured_tr}. For the second issue, we directly adopt the combined separation criterion proposed by Shahabuddin~\cite[][Section 2.4]{PSDFOThesis}, with minor modifications in corner cases.  

Next, we briefly outline the general workflow of the \texttt{UPOQA} algorithm for solving problem (\ref{eq:PSProb}). At initialization, the algorithm first constructs the surrogate model \( m_{0,i} \) and the interpolation set \( \mathcal{Y}_{0,i} \) for each element \( f_i \). Then, using all interpolation points from \( \{\mathcal{Y}_{0,i}\}_{i=1}^q \), it seeks a better starting point \( x_0 \) via the method described in Section \ref{sec:upoqa_start}. If successful, the objective value \( f(x_0) \) will be lower than that of the user-provided point \( x_{\text{start}} \). In each iteration, \texttt{UPOQA} first solves the trust-region subproblem (\ref{eq:PSTrustRegionProb0}) using the modified projected gradient method from Section \ref{sec:structured_tr}, obtaining a trial step \( s_k \). The algorithm then decides whether to accept this step based on the magnitude of \( \|s_k\|_2 \).

For $s_k$ with large norms, the algorithm first computes a candidate point $\hat{x}_k = x_k + s_k$ and updates all models and interpolation point sets based on $\hat{x}_k$. When updating the model $m_{k,i}$ and the interpolation set $\mathcal{Y}_{k,i}$, a new interpolation point $\hat{x}_k^{\mathcal{I}_i}$ is added to $\mathcal{Y}_{k,i}$. However, this procedure is performed only if $\hat{x}_k^{\mathcal{I}_i}$ does not compromise the poisedness of the interpolation set $\mathcal{Y}_{k,i}$. For updating the trust-region radii $\left\{\Delta_{k,i}\right\}_{i=1}^q$, we employ the combined separation criterion proposed by Shahabuddin~\cite[][Section 2.4]{PSDFOThesis}. If the objective exhibits insufficient reduction at $\hat{x}_k$, the algorithm further checks whether a geometry-improving step should be taken. If the conditions for a geometry-improving step are not met, the trust-region resolution $\rho_k$ is reduced. The value of $\rho_k$ serves as a lower bound for all trust-region radii $\{\Delta_{k,i}\}_{i=1}^q$, indicating the search precision of the algorithm at the current iteration. For $s_k$ with small norms, the algorithm will reduce the trust-region radii and perform geometry-improvement steps. If such small-step events occur consecutively multiple times or no geometry improvement steps are triggered, the algorithm also reduces $\rho_k$.  
 
Algorithm \ref{alg:UPOQA} outlines the procedure of \texttt{UPOQA}. Steps unrelated to the partially separable structure closely resemble those in Powell-style model-based algorithms. For these aspects, we refer readers to the descriptions of these algorithms~\cite{NEWUOAsoftware,PowellDevNEWUOA,COBYQA,PDFO} or the implementation of \texttt{UPOQA} for further details.

\begin{algorithm}
    \caption{The \texttt{UPOQA} Algorithm}\label{alg:UPOQA}
    \KwIn{Objective $f$, elements $\{f_{i}\}_{i=1}^q$, index sets $\{\mathcal{I}_{i}\}_{i=1}^q$, starting point $x_{\text{start}}$, initial trust-region resolution $\rho_0$}
    \KwOut{Approximate solution to problem (\ref{eq:PSProb})}
    Initialize $\{\mathcal{Y}_{0,i}\}_{i=1}^q$ and $\{m_{0,i}\}_{i=1}^q$ centered at $x_{\text{start}}$, set $\boldsymbol{\Delta}_0 = [\rho_0,\ldots,\rho_0]^\top \in \Real^q$\;\label{algstep:UPOQAinit}

    Compute an improved starting point $x_0$ using the method described in Section \ref{sec:upoqa_start}\;\label{algstep:searchx0}
    
    \For{$k = 0, 1, \ldots$}{
        $F^{GI}_1,\ldots, F^{GI}_q \from \false$\;
       
        Solve the trust-region subproblem\begin{eqnarray*}
            &\min\limits_{s\in \Real^n}\ & \sum_{i = 1}^q m_{k,i}(x_k + s),\\
            &\text{s.t.}\ &
            \|s^{\mathcal{I}_i}\|_2 \leq \Delta_{k,i},\quad\text{for } i = 1,2,\ldots, q, \nonumber
        \end{eqnarray*}\label{algstep:UPOQAtrProb}
        to obtain the step $s_k$ using the method detailed in Section \ref{sec:structured_tr} and set $\hat{x}_k \from x_k + s_k$\;

        \eIf{$\max_i\left(\left\|s_k^{\mathcal{I}_i}\right\|_2\right) \leq \rho_k/2$}{
            \For{$i = 1,2,\ldots,q$}{
            \( \Delta_{k,i} \leftarrow \max\left(\Delta_{k,i}/2, \rho_k\right) \)\;
            
            \If{the conditions for geometry improvement of model $m_{k,i}$ are met}{
            $F^{GI}_i \from \true$\;
            }
            }
            \If{entered this branch \textit{in multiple consecutive iterations} $\mathbf{or}$ $F^{GI}_1,,\ldots,F^{GI}_q$ are all \false}{
            $\rho_k \leftarrow 0.1\rho_k$\;
            }
        }{
            Compute\begin{equation*}
                r_k \from \frac{f(\hat{x}_k) - f(x_k)}{m_k(\hat{x}_k) - m_k(x_k)},\quad r_{k,i} \from \frac{f_i(\hat{x}_k) - f_{i}(x_k)}{m_{k,i}(\hat{x}_k) - m_{k,i}(x_k)},\quad \text{for } i = 1,2,\ldots,q;
            \end{equation*}

            \For{$i = 1,2,\ldots,q$}{
                \If{$\hat{x}_k^{\mathcal{I}_i}$ does not significantly deteriorate the poisedness of $\mathcal{Y}_{k,i}$}{\label{algstep:filter_if}
                    Select an interpolation point $y_k^{\text{del},i}$ to remove from $\mathcal{Y}_{k,i}$\;
                    Set $\mathcal{Y}_{k,i} \from \mathcal{Y}_{k,i} \backslash \left\{y_k^{\text{del},i}\right\} \cup {\hat{x}_k^{\mathcal{I}_i}}$ and update the element model $m_{k,i}$\;
                }
                Adjust the trust-region radius $\Delta_{k,i}$ using the method detailed in Section \ref{sec:upoqa_adjust_tr}\;\label{algstep:adjust_delta}
            }
            \If{$r_k \leq 0.1$}{
                \For{$i = 1,2,\ldots,q$}{
                    \If{the conditions for geometry improvement of model $m_{k,i}$ are met}{
            $F^{GI}_i \from \true$;
            }
                }
                \If{$F^{GI}_1,,\ldots,F^{GI}_q$ are all \false}{
                $\rho_k \leftarrow 0.1\rho_k$\;
            }
            }
        }
        Set $x_{k+1}$ as the point with the smaller function value between $x_k$ and $\hat{x}_k$\;
        Perform geometry-improving steps for all element models and interpolation sets where $F^{GI}_i = \true$\;
    }
\end{algorithm}

\subsection{Structured Trust Region}\label{sec:structured_tr}

\texttt{UPOQA} assigns an independent trust-region radius $\Delta_{k,i}$ to each element model $m_{k,i}$. Consequently, the feasible region of the trust-region subproblem is no longer a ball but rather an $n$-dimensional cylinder set defined by the projections $\left\{P_{\mathcal{R}_i}\colon \Real^n \to \mathcal{R}_i\right\}_{i=1}^q$ and the regions $\left\{\mathcal{B}_i(\Delta_{k,i}) \subset \mathcal{R}_i\right\}_{i=1}^q$:
\begin{eqnarray}\label{eq:CylinderDef}
    \mathcal{S}(\boldsymbol{\Delta}_k) &\triangleq&\bigcap_{i=1}^q P_{\mathcal{R}_i}^{-1}\left(\mathcal{B}_i\left(\Delta_{k,i}\right)\right)
    = \left\{s\in \Real^n \mid \|s^{\mathcal{I}_i}\| \leq \Delta_{k,i},\ i = 1,2,\ldots,q \right\},
\end{eqnarray}
where $\boldsymbol{\Delta}_k = [\Delta_{k,1},\ldots,\Delta_{k,q}]$. As before, $\|s^{\mathcal{I}_i}\|$ is shorthand for $\|P_{\mathcal{R}_i} (s)\|$. When the objective takes the form $f(x,y,z) = f_1(x,z) + f_2(y,z)$, $\|s^{\mathcal{I}_i}\|$ uses the $L_2$-norm, and $\Delta_{k,1} = \Delta_{k,2} \triangleq \overline{\Delta}$, the set $\mathcal{S}(\boldsymbol{\Delta}_k)$ will reduce to a Steinmetz solid, which is the intersection of two right circular cylinders with equal radii and orthogonal axes, as illustrated in Figure \ref{fig:standard_steinmetz}.

\begin{figure}
    \centering
    \includegraphics[width=0.5\textwidth]{./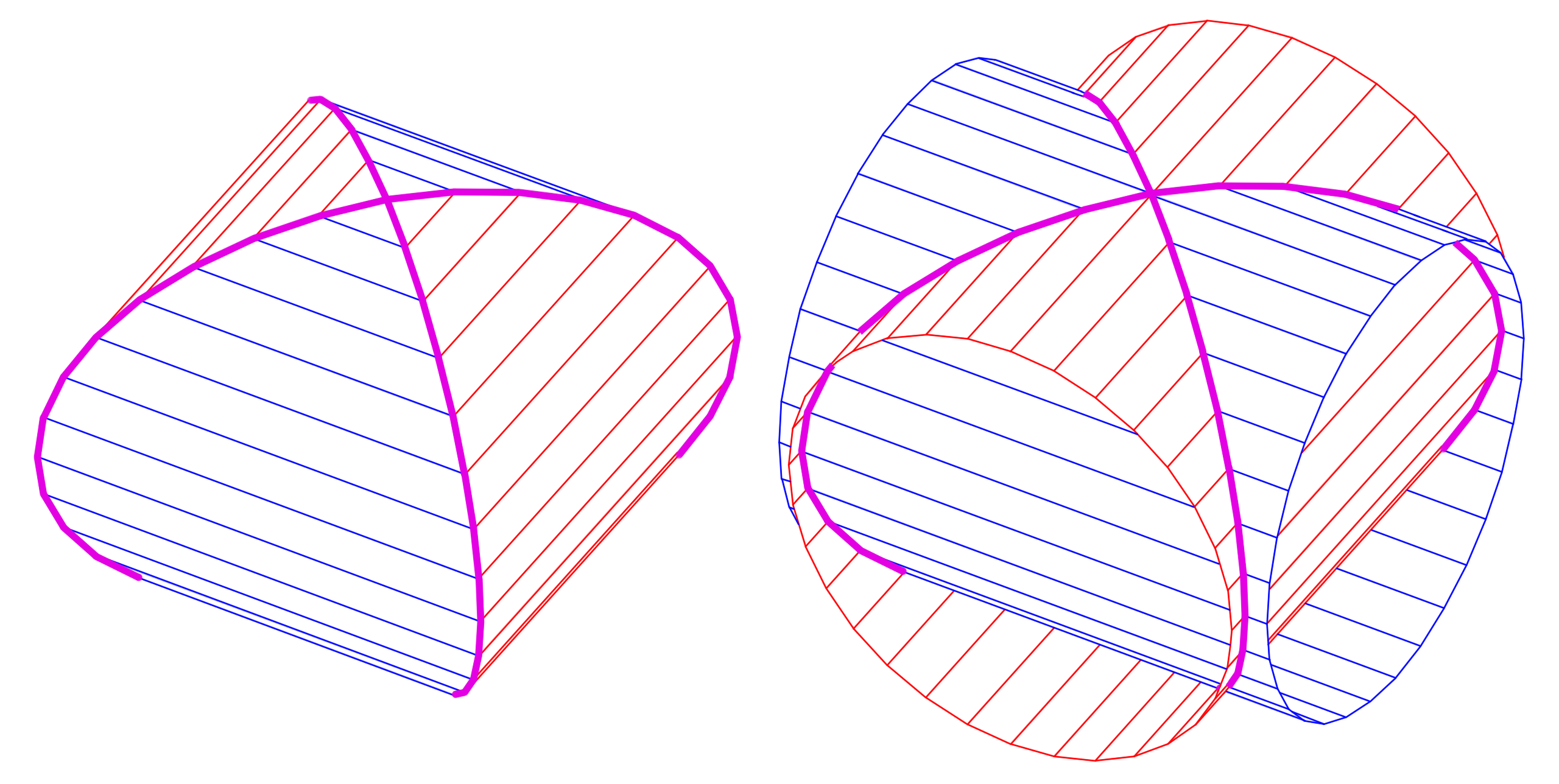}
    \Description{Fully described in the text.}
    \caption[Steinmetz solid]{
        Generation of the surface of a Steinmetz solid.
    }\label{fig:standard_steinmetz}
\end{figure}

When $\Delta_{k,1} \neq \Delta_{k,2}$, the shape of $\mathcal{S}(\boldsymbol{\Delta}_k)$ becomes elongated, indicating that the model is more reliable in one direction than another, thus permitting larger steps in the reliable direction. In more general cases, the geometry of $\mathcal{S}(\boldsymbol{\Delta}_k)$ is highly complex, making the trust-region problem  
\begin{eqnarray}\label{eq:PSTrustRegionProb}
             &\min\limits_{s\in \mathcal{S}(\boldsymbol{\Delta}_k)}\ & m_k(x + s) = \sum_{i = 1}^q m_{k,i}(x + s),
\end{eqnarray}  
rather challenging to solve. It is straightforward to observe that the cylinder set $\mathcal{S}(\boldsymbol{\Delta}_k)$ contains an inscribed ball with radius $\Delta_{k,\min} \triangleq \min\limits_{i=1,\ldots,q}\Delta_{k,i}$, denoted as $\mathcal{B}(\Delta_{k,\min})$. Thus, a cheap alternative is to solve the problem (\ref{eq:PSTrustRegionProb}) approximately within $\mathcal{B}(\Delta_{k,\min})$. To address (\ref{eq:PSTrustRegionProb}), Conn et al.~\cite{PSConnTrustRegion} proposed a two-stage strategy. The first stage involves minimizing $m_k\left(x_k - t \nabla m_k(x_k)\right)$ within $\mathcal{B}(\Delta_{k,\min})$ to obtain a Cauchy step $s_C$. If further reduction can be achieved by increasing the step length, a subsequent minimization of $m_k\left(x_k - t s_C\right)$ is then attempted within $\mathcal{S}(\boldsymbol{\Delta}_k)$. In another DFO method \texttt{PSDFO}~\cite{PSNaiveDFO}, all element models share the same trust-region radius, thereby circumventing the difficulty of handling structured trust regions. If derivatives are available, it is reasonable to reduce computational costs by simplifying the trust region geometry—for instance, replacing the original region with an inscribed ellipsoid or employing the $L_{\infty}$-norm in the constraints to transform the trust region into a cube~\cite{PSDFOThesis}. Nevertheless, to avoid potential degradation in subproblem solution accuracy caused by such simplifications, we prefer to maintain the trust region \(\mathcal{S}(\boldsymbol{\Delta}_k)\) in its original form. 

Since \(\mathcal{S}(\boldsymbol{\Delta}_k)\) remains convex, employing the projected gradient method is a natural approach for solving problem (\ref{eq:PSTrustRegionProb}). For projecting onto convex sets defined by the intersection of multiple convex sets, \emph{Dykstra’s projection algorithm}~\cite{Dykstra_proj}—which converges precisely to the projection point—is widely adopted. Alternative approaches include the \emph{alternating projection method}~\cite{alt_proj} and \emph{averaged projection method}~\cite{averageProj}, although they both only yield approximate projections. All of these methods achieve linear convergence rates under appropriate conditions~\cite{Dykstra_proj_converge1, Dykstra_proj_converge2, alt_proj_converge, averageProj}. However, the projected gradient method itself often requires numerous iterations to converge. Consequently, solving the trust region subproblem typically constitutes the dominant cost in  the algebraic complexity of the overall algorithm, with the projection operator accounting for a substantial portion of this cost. Moreover, the convergence of these projection methods can require many iterations in certain cases, further increasing the computational burden. 

In order to improve computational efficiency, we introduce a specialized approximate projection operator tailored for trust regions of the form (\ref{eq:CylinderDef}). This operator, termed the \textit{Steinmetz projection}, is designed to converge within a fixed number of iterations. The detailed procedure is presented in Algorithm \ref{alg:PSProj}, which utilizes the subroutine \(\texttt{SHRINK}\) (Algorithm \ref{alg:Shrink}).  For clarity, we define the \emph{violation ratio} of a point \( s \in \mathbb{R}^n \) with respect to the \(i\)-th element as  
\begin{equation*}  
    v_i(s) \triangleq \frac{\left\|s^{\mathcal{I}_i}\right\|_2}{\Delta_i}.  
\end{equation*}  
The core idea of Steinmetz projection stems from an observation: Given an index set $\mathcal{G}$ and a point $s\in \mathbb{R}^n$ where each projection $s^{\mathcal{I}_i}$ satisfies $\|s^{\mathcal{I}_i}\| = \mu\Delta_i$ for all $i\in\mathcal{G}$, then scaling $s$ by any real factor $t$ yields $\|(ts)^{\mathcal{I}_i}\| = t\mu\Delta_i$ for all $i\in\mathcal{G}$. Thus, when projecting \( s \) onto $\mathcal{S}(\boldsymbol{\Delta})$, we first collect the indices of all elements with the highest violation ratios into a set \(\mathcal{G}\), which by construction satisfies \(\|s^{\mathcal{I}_i}\| = \mu\Delta_i\) for some \(\mu \in \mathbb{R}\) and all \(i \in \mathcal{G}\). Next, we rescale \( s \) in the subspaces corresponding to \(\mathcal{G}\) until another element can be added into \(\mathcal{G}\), repeating the process iteratively. Here, \(\mathcal{G}\) is termed the \emph{shrinking set}. Once \(\mathcal{G}\) encompasses all elements and the current point \( s \) satisfies \(\|s^{\mathcal{I}_i}\|_2 = \mu \Delta_i\) for all \( i \), a final scaling by \( 1/\mu \) ensures the resulting point lies within \(\mathcal{S}(\boldsymbol{\Delta})\). As shown in Algorithm \ref{alg:PSProj}, each iteration first identifies the shrinking set and then performs the shrinking operation.

\begin{algorithm}
    \caption{Steinmetz Projection}\label{alg:PSProj}
    \KwIn{Point \( s_0 \in \mathbb{R}^n \), index sets \( \{\mathcal{I}_{i}\}_{i=1}^q \), trust-region radii \( \{\Delta_{i}\}_{i=1}^q \)}
    \KwOut{An approximation of \( P_{\mathcal{S}(\boldsymbol{\Delta})}(s_0) \)}

    \For{\( k = 0, 1, \ldots \)}{
        Compute \( v_{i} \gets \left\|s_0^{\mathcal{I}_i} \right\|_2 / \Delta_{i} \) for \( i = 1,\ldots,q \)\;\label{algstep:PSproj_vi}

        \If{\( \max\limits_{i=1,\ldots,q} v_{k,i} \leq 1 \)}{
            \Break
        }

        \( \mathcal{G}_k \gets \left\{i = 1,\ldots, q \mid v_{k,i} = \max\limits_{i=1,\ldots,q} v_{k,i} \right\} \)\;\label{algstep:PSproj_Gi}
        
        \( s_k \gets \texttt{SHRINK}\left(s_k,~\{\mathcal{I}_{i}\}_{i=1}^q,~\{\Delta_{i}\}_{i=1}^q,~\mathcal{G}_k\right) \)\;
    }
\end{algorithm}

\begin{algorithm}
    \caption{Subroutine $\texttt{SHRINK}$}\label{alg:Shrink}
    \KwIn{Point $s \in \mathbb{R}^n$, index sets $\{\mathcal{I}_{i}\}_{i=1}^q$, trust-region radii $\{\Delta_{i}\}_{i=1}^q$, shrinking set $\mathcal{G}$}
    \KwOut{Shrunk point $s_*$}
    
    Select arbitrary $i\in\mathcal{G}$ and compute $u \gets \left\|s^{\mathcal{I}_i}\right\|_2 / \Delta_i$\;\label{step:trial_i}
    
    \eIf{$\mathcal{G} = \{1,2,\ldots,q\}$}{
        $s_* \gets s/u$\;
    }{
        $\mathcal{I}_{\mathcal{G}} \gets \bigcup_{i\in\mathcal{G}} \mathcal{I}_i$\;
        \For{$i = 1, \ldots,q$}{
            \eIf{$i \in \mathcal{G}$}{
                $t_i \gets 1/u$\;\label{algstep:ti_calc_1}
            }{
                $\displaystyle t_i \gets \max\left\{\frac{1}{u},\ \frac{\|s^{ \mathcal{I}_i \backslash \mathcal{I}_{\mathcal{G}}} \|_2}{\sqrt{ u^2\Delta_i^2 - \|s^{ \mathcal{I}_i \cap \mathcal{I}_{\mathcal{G}}} \|_2^2 }}\right\} $\;\label{algstep:ti_calc_2}
            }
        }
        $t_* \from \max_{i = 1,\ldots,q} t_i$\;\label{algstep:ti_calc_max}
        $s_* \from s +  \left(t_* - 1\right)s^{\mathcal{I}_{\mathcal{G}}} $\;
    }
\end{algorithm}

The $\texttt{SHRINK}$ subroutine (Algorithm \ref{alg:Shrink}) implements the shrinking operation. Assuming without loss of generality that the given point $s$ lies outside $\mathcal{S}(\boldsymbol{\Delta})$, and given a shrinking set $\mathcal{G} = \{1,2,\ldots,\hat{q}\}\ (\hat{q} < q)$ satisfying  
\begin{eqnarray}\label{eq:shrink_group_start_prop}  
    v_1(s) = \cdots = v_{\hat{q}}(s) > v_{i}(s),\quad \forall i \in\{\hat{q} + 1, \ldots,q\},
\end{eqnarray}  
the subroutine performs component-wise contraction in the subspace $\mathcal{R}_{\mathcal{G}} \triangleq \operatorname{span}\left(\cup_{i\in\mathcal{G}} \mathcal{R}_i\right)$. Specifically, it scales the component $s^{\mathcal{I}_{\mathcal{G}}}$ by a factor $t_* \in \mathbb{R}$, generating a new point $s_* \triangleq s + (t_*-1) s^{\mathcal{I}_{\mathcal{G}}}$ that satisfies either:  
\begin{eqnarray}\label{eq:shrink_group_end_prop1}  
    1 = v_1(s_*) = \cdots = v_{\hat{q}}(s_*) \geq v_{i}(s_*),\quad \forall i \in\{\hat{q} + 1, \ldots, q\},  
\end{eqnarray}  
or introduces an elemental index $\hat{p}\in \{\hat{q} + 1, \ldots, q\}$ such that  
\begin{eqnarray}\label{eq:shrink_group_end_prop2}  
    v_1(s_*) = \cdots = v_{\hat{q}}(s_*) = v_{\hat{p}}(s_*) \geq v_{i}(s_*),\quad \forall i \in\{\hat{q} + 1, \ldots, q\}\setminus \{\hat{p}\}, 
\end{eqnarray}  
where $\mathcal{I}_{\mathcal{G}} \triangleq \bigcup_{i\in\mathcal{G}} \mathcal{I}_i.$

This mechanism enables the outer loop (Algorithm \ref{alg:PSProj}) to terminate the projection or subsequently expand the shrinking set to $\mathcal{G}_{\text{new}} = \{1,2,\ldots,\hat{q}\}\cup\{\hat{p}\}$. The elemental scaling ratio $t_i$ is the maximum of two possible choices, as computed in Steps \ref{algstep:ti_calc_1} and \ref{algstep:ti_calc_2} of Algorithm \ref{alg:Shrink}, yielding the candidate point $s_i \triangleq s + (t_i-1) s^{\mathcal{I}_{\mathcal{G}}}$ satisfying either $1 = v_1(s_i) = \cdots = v_{\hat{q}}(s_i) > v_{i}(s_i)$ or $v_1(s_i) = \cdots = v_{\hat{q}}(s_i) = v_{i}(s_i)$.  The final contraction ratio $t_*$ is then selected as the maximum value of $t_1,\ldots,t_q$ ensuring satisfaction of (\ref{eq:shrink_group_end_prop1}) or (\ref{eq:shrink_group_end_prop2}). Note that the selection of trial index $i$ in Step \ref{step:trial_i} is arbitrary, as the ratio $\left\|s^{\mathcal{I}_i}\right\|_2 / \Delta_i$ remains constant across all $i\in\mathcal{G}$ due to the defining property of the shrinking set $\mathcal{G}$. Theorem \ref{theo:converg_shrink} formally establishes the correctness of the $\texttt{SHRINK}$ subroutine.

\begin{theorem}\label{theo:converg_shrink}
    Given a point $s \notin \mathcal{S}(\boldsymbol{\Delta})$, element index sets $\{\mathcal{I}_{i}\}_{i=1}^q$, trust-region radii $\{\Delta_{i}\}_{i=1}^q$, and a shrinking set $\mathcal{G}$ satisfying condition (\ref{eq:shrink_group_start_prop}), there exists $\hat{p}\in \{\hat{q} + 1, \ldots, q\}$ such that the point $s_*$ returned by Algorithm \ref{alg:Shrink} will satisfy either (\ref{eq:shrink_group_end_prop1}) or (\ref{eq:shrink_group_end_prop2}).
\end{theorem}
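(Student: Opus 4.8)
The plan is to reduce the entire statement to a one–parameter analysis of the scaling family
\[
s(t) \triangleq s + (t-1)\,s^{\mathcal{I}_{\mathcal{G}}},
\]
which is precisely the point produced by Algorithm \ref{alg:Shrink} when the contraction ratio equals $t$, so that $s_* = s(t_*)$. Geometrically $s(t)$ scales the coordinates indexed by $\mathcal{I}_{\mathcal{G}}$ by $t$ and fixes all others. First I would dispose of the trivial branch $\mathcal{G}=\{1,\ldots,q\}$: there $\mathcal{I}_{\mathcal{G}}$ exhausts all elemental variables, the subroutine returns $s_*=s/u$, and every $v_i(s_*)=v_i(s)/u=1$, so (\ref{eq:shrink_group_end_prop1}) holds at once. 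Throughout I will use that $u=v_i(s)>1$ for $i\in\mathcal{G}$, which is forced by $s\notin\mathcal{S}(\boldsymbol{\Delta})$ together with (\ref{eq:shrink_group_start_prop}) (the maximal violation ratio is attained on $\mathcal{G}$).

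For the substantive case $\hat q<q$ I would compute the violation-ratio profiles along $t$. For $i\in\mathcal{G}$ we have $\mathcal{I}_i\subseteq\mathcal{I}_{\mathcal{G}}$, hence $s(t)^{\mathcal{I}_i}=t\,s^{\mathcal{I}_i}$ and $v_i(s(t))=tu$: a single linear profile shared by the whole shrinking set. For $i\notin\mathcal{G}$ I would split $\mathcal{I}_i$ into the scaled part $\mathcal{I}_i\cap\mathcal{I}_{\mathcal{G}}$ and the fixed part $\mathcal{I}_i\setminus\mathcal{I}_{\mathcal{G}}$, giving
\[
\Delta_i^2\,v_i(s(t))^2 = t^2\bigl\|s^{\mathcal{I}_i\cap\mathcal{I}_{\mathcal{G}}}\bigr\|_2^2 + \bigl\|s^{\mathcal{I}_i\setminus\mathcal{I}_{\mathcal{G}}}\bigr\|_2^2 .
\]
The key object is the gap $g_i(t)\triangleq (tu)^2-v_i(s(t))^2$, which is affine in $t^2$ with leading coefficient $u^2-\|s^{\mathcal{I}_i\cap\mathcal{I}_{\mathcal{G}}}\|_2^2/\Delta_i^2$. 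This coefficient is strictly positive because $\|s^{\mathcal{I}_i\cap\mathcal{I}_{\mathcal{G}}}\|_2\le\|s^{\mathcal{I}_i}\|_2=v_i(s)\Delta_i<u\Delta_i$, so $g_i$ is strictly increasing for $t>0$; moreover $g_i(1)>0$ since $v_i(s)<u$. Hence $g_i$ has a unique positive root $\tau_i$, and a direct computation shows $\tau_i$ equals the second argument of the maximum in Step \ref{algstep:ti_calc_2}. Consequently $t_i=\max\{1/u,\tau_i\}$ for $i\notin\mathcal{G}$ (and $t_i=1/u$ for $i\in\mathcal{G}$), and $g_i(t)\ge 0\iff t\ge\tau_i$.

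With these profiles established, the conclusion follows by splitting on how $t_*=\max_i t_i$ is attained in Step \ref{algstep:ti_calc_max}. Reading $t_*$ as the first boundary-contact event encountered while decreasing $t$ from $1$: the shrinking set reaches the boundary ($tu=1$) at $t=1/u$, and an outside element meets the common profile ($v_i(s(t))=tu$) at $t=\tau_i$; the maximum selects the earliest such event. If $t_*=1/u$, then every $\tau_i\le 1/u$, so $g_i(1/u)\ge 0$, i.e. $v_i(s_*)\le 1=v_j(s_*)$ for $j\in\mathcal{G}$, which is exactly (\ref{eq:shrink_group_end_prop1}). Otherwise $t_*=\tau_{\hat p}>1/u$ for some $\hat p\in\{\hat q+1,\ldots,q\}$, whence $v_{\hat p}(s_*)=t_*u=v_j(s_*)$ for $j\in\mathcal{G}$, while every other outside index satisfies $\tau_i\le t_*$ and therefore $g_i(t_*)\ge 0$, i.e. $v_i(s_*)\le t_*u$; this is (\ref{eq:shrink_group_end_prop2}).

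The main obstacle is bookkeeping rather than a single sharp estimate. The crux is establishing the strict monotonicity of each gap $g_i$ together with the positivity of its leading coefficient — both hinging on the strict inequality $v_i(s)<u$ guaranteed by (\ref{eq:shrink_group_start_prop}) — and then correctly interpreting the outer maximum as selecting the earliest boundary contact. I would also take care over degenerate configurations that the formula in Step \ref{algstep:ti_calc_2} must silently absorb: an outside element disjoint from $\mathcal{I}_{\mathcal{G}}$ (so $\|s^{\mathcal{I}_i\cap\mathcal{I}_{\mathcal{G}}}\|_2=0$ and its ratio is constant in $t$), a vanishing fixed part ($\|s^{\mathcal{I}_i\setminus\mathcal{I}_{\mathcal{G}}}\|_2=0$, which makes $\tau_i=0$ and forces $t_i=1/u$), and the well-definedness of the radicand $u^2\Delta_i^2-\|s^{\mathcal{I}_i\cap\mathcal{I}_{\mathcal{G}}}\|_2^2$, whose strict positivity follows from the same bound $v_i(s)<u$.
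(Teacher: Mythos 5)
Your proposal is correct and follows essentially the same route as the paper's proof: the same case split on whether $t_*$ equals $1/u$ or is attained by some outside index $\hat p$, and the same orthogonal decomposition $\|s_*^{\mathcal{I}_i}\|_2^2=\|s^{\mathcal{I}_i\setminus\mathcal{I}_{\mathcal{G}}}\|_2^2+t_*^2\|s^{\mathcal{I}_i\cap\mathcal{I}_{\mathcal{G}}}\|_2^2$ underlying the identification of $\tau_i$ with the second argument of Step \ref{algstep:ti_calc_2}. The only cosmetic difference is that you derive the bound $v_i(s_*)\le t_*u$ for the remaining outside indices from monotonicity of the gap $g_i$, where the paper uses a contradiction with the maximality of $t_{\hat p}$; your explicit treatment of the degenerate subcases is a welcome addition but not a change of method.
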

\begin{proof}
Let $\hat{p} = \operatorname{argmax}_{i\notin \mathcal{G}} t_i$, where $t_i$ is determined by Steps \ref{algstep:ti_calc_1} and \ref{algstep:ti_calc_2} of Algorithm \ref{alg:Shrink}. Let $u = \left\|s^{\mathcal{I}_i}\right\|_2 / \Delta_i$ for an arbitrary $i\in\mathcal{G}$, noting that this value is independent of the choice of $i$. Define $t_* = \max_{i = 1,\ldots,q} t_i$ and $s_* = s + (t_* - 1)s^{\mathcal{I}_{\mathcal{G}}}$. We consider the size of $u^2\Delta_{\hat{p}}^2$ in two cases. 

First, if $u^2 \Delta_{\hat{p}}^2 > u^2 \left\|s^{ \mathcal{I}_{\hat{p}}\backslash \mathcal{I}_{\mathcal{G}}} \right\|_2^2 + \left\|s^{ \mathcal{I}_{\hat{p}} \cap \mathcal{I}_{\mathcal{G}}}\right\|_2^2$ holds, then we have 
\begin{eqnarray*}
    \frac{1}{u} > \frac{\|s^{ \mathcal{I}_i \backslash \mathcal{I}_{\mathcal{G}}} \|_2}{\sqrt{ u^2\Delta_i^2 - \|s^{ \mathcal{I}_i \cap \mathcal{I}_{\mathcal{G}}} \|_2^2 }},
\end{eqnarray*}
and thus $t_{\hat{p}} = 1/u$. On one hand, $t_i = 1/u ~ (\forall i \in \mathcal{G})$, and on the other hand, $t_i \leq t_{\hat{q}} = 1/u~ (\forall i \notin \mathcal{G})$. Thus, $t_* = 1/u$. For any $i \in \mathcal{G}$, since $\mathcal{I}_i \subset \mathcal{I}_{\mathcal{G}}$, we have
\begin{equation}\label{eq:converg_shrink_step1}
    s_*^{\mathcal{I}_i} = P_{\mathcal{R}_i} s_* = t_* s^{\mathcal{I}_i},\quad\forall i \in \mathcal{G}.
\end{equation}
Furthermore, by the definition of $u$, it follows that
\begin{align*}
    s_*^{\mathcal{I}_i} =  \frac{1}{u} s^{\mathcal{I}_i}
    \Longrightarrow \left\| s_*^{\mathcal{I}_i}\right\|_2 = \frac{1}{u} \left\|s^{\mathcal{I}_i}\right\|_2 = \Delta_i
    \Longrightarrow v_i(s_*) = 1
    ,\quad&\forall i \in \mathcal{G}.
\end{align*}
Meanwhile, for any $i \notin \mathcal{G}$, since  
\begin{eqnarray}\label{eq:converg_shrink_step2}  
    s_*^{\mathcal{I}_i} =  
    s^{\mathcal{I}_i \backslash \mathcal{I}_{\mathcal{G}}} +   
    t_* s^{\mathcal{I}_i \cap \mathcal{I}_{\mathcal{G}}},\quad\forall i \notin \mathcal{G},  
\end{eqnarray}  
and $s^{\mathcal{I}_i \backslash \mathcal{I}_{\mathcal{G}}} \perp  s^{\mathcal{I}_i \cap \mathcal{I}_{\mathcal{G}}}$, we have  
\begin{align*}  
    &\left\|s_*^{\mathcal{I}_i}\right\|_2^2 =  
    \left\|s^{\mathcal{I}_i \backslash \mathcal{I}_{\mathcal{G}}}\right\|_2^2 + \frac{1}{u^2} \left\|s^{\mathcal{I}_i \cap \mathcal{I}_{\mathcal{G}}}\right\|_2^2   
 \leq \Delta_i^2 \Longrightarrow  
 v_i(s_*) \leq 1,\quad\forall i \notin \mathcal{G}.
\end{align*}  
Thus, in this case, inequality (\ref{eq:shrink_group_end_prop1}) holds.

In the second case, when \( u^2 \Delta_{\hat{p}}^2 \leq u^2 \left\|s^{ \mathcal{I}_{\hat{p}} \backslash \mathcal{I}_{\mathcal{G}}} \right\|_2^2 + \left\|s^{ \mathcal{I}_{\hat{p}} \cap \mathcal{I}_{\mathcal{G}}}\right\|_2^2 \) holds, we have \( t_{\hat{p}} \geq 1/u \), and thus \( t_* = t_{\hat{p}} \). For any \( i \in \mathcal{G} \), since equation (\ref{eq:converg_shrink_step1}) still holds, it follows that \( v_i(s_*) = t_* v_i(s) = t_{\hat{p}} u \). Therefore, all \( v_i(s_*) \) (\( \forall i \in \mathcal{G} \)) remain the same. From equation (\ref{eq:converg_shrink_step2}), we derive  
\begin{eqnarray*}  
    \left\|s_*^{\mathcal{I}_{\hat{p}}}\right\|_2^2 = 
    \left\|s^{\mathcal{I}_{\hat{p}} \backslash \mathcal{I}_{\mathcal{G}}}\right\|_2^2 + t_{\hat{p}}^2 \left\|s^{\mathcal{I}_{\hat{p}} \cap \mathcal{I}_{\mathcal{G}}}\right\|_2^2
    =
    \left\|s^{\mathcal{I}_{\hat{p}} \backslash \mathcal{I}_{\mathcal{G}}}\right\|_2^2 +   
   \frac{\left\|s^{ \mathcal{I}_{\hat{p}} \backslash \mathcal{I}_{\mathcal{G}}} \right\|_2^2\left\|s^{\mathcal{I}_{\hat{p}} \cap \mathcal{I}_{\mathcal{G}}}\right\|_2^2}{  
               u^2\Delta_{\hat{p}}^2 - \left\|s^{ \mathcal{I}_{\hat{p}} \cap \mathcal{I}_{\mathcal{G}}} \right\|_2^2  
            }   
         =
         \frac{u^2\Delta_{\hat{p}}^2\left\|s^{\mathcal{I}_{\hat{p}}\backslash \mathcal{I}_{\mathcal{G}}}\right\|_2^2  }{  
               u^2\Delta_{\hat{p}}^2 - \left\|s^{ \mathcal{I}_{\hat{p}} \cap \mathcal{I}_{\mathcal{G}}} \right\|_2^2  
            }  
            =  
          u^2  \Delta_{\hat{p}}^2 t_{\hat{p}}^2. \nonumber  
\end{eqnarray*}  
Thus, we have \( v_{\hat{p}}(s_*) = t_{\hat{p}} u \). For any \( i \notin \mathcal{G} \), we claim that \( v_i(s_*) \leq t_{\hat{p}} u \). Otherwise, the inequality \( \left\|s_*^{\mathcal{I}_i}\right\|_2 > t_{\hat{p}} u \Delta_i \) would hold, leading to  
\begin{eqnarray*}  
   \left\|s^{\mathcal{I}_i \backslash \mathcal{I}_{\mathcal{G}}}\right\|_2^2 + t_{\hat{p}}^2 \left\|s^{\mathcal{I}_i \cap \mathcal{I}_{\mathcal{G}}}\right\|_2^2 > t_{\hat{p}}^2 u^2 \Delta_i^2 \Longrightarrow
   \frac{\left\|s^{ \mathcal{I}_i \backslash \mathcal{I}_{\mathcal{G}}} \right\|_2}{\sqrt{  
               u^2\Delta_i^2 - \left\|s^{ \mathcal{I}_i \cap \mathcal{I}_{\mathcal{G}}} \right\|_2^2  
            }} > t_{\hat{p}},  
\end{eqnarray*}  
which implies \( t_i > t_{\hat{p}} \). This contradicts the assumption \( \hat{p} = \operatorname{argmax}_{i\notin \mathcal{G}} t_i \). Therefore,  we have
\begin{equation*}  
    t_{\hat{p}} u = v_j(s_*)  = v_{\hat{p}}(s_*) \geq v_{i}(s_*), \quad \forall ~i \in \mathcal{G}^c\backslash \{\hat{p}\},~ j\in\mathcal{G}.  
\end{equation*}  
Hence, inequality (\ref{eq:shrink_group_end_prop2}) holds.  
\end{proof}

% \raggedbottom

Note that at the initialization of Steinmetz projection, we may exclude the index sets $\mathcal{I}_j$ of elements satisfying $v_j(s_0) \leq 1$ from $\{\mathcal{I}_{i}\}_{i=1}^q$, since they never contribute $t_j$ values exceeding $1/u$ and thus cannot affect the value of $t_*$. Furthermore, Steps \ref{algstep:PSproj_vi} and \ref{algstep:PSproj_Gi} of Algorithm \ref{alg:PSProj} need not be explicitly recomputed in each iteration, as the new shrinking set can always be obtained simply using intermediate results from the subroutine $\texttt{SHRINK}$. If $\left|\left\{i\mid v_j(s_0) \leq 1\right\}\right| = \overline{q}$, then each run of Algorithm \ref{alg:Shrink} has a cost of $O\left(n\left(q - \overline{q} - |\mathcal{G}| \right)\right)$. Since each run either terminates the projection or adds at least one new member to the shrinking set, the algorithm iterates at most $q - \overline{q}$ times. Therefore, the computational complexity of Algorithm \ref{alg:PSProj} is $O\left(
    \sum_{i=1}^{q-\overline{q}} n(q-\overline{q} - i)
    \right) 
    =
     O\left(
    n(q-\overline{q})^2
    \right)$.

Note that the Steinmetz projection is only an approximate method with moderate accuracy. To compensate, we adopt a hybrid strategy: first apply the averaged projection method for \(k_{\text{avg}} = 1\) to \(5\) iterations, then project the resulting point using Steinmetz projection. For any point \(s \in \mathbb{R}^n\) to be projected, we denote the output of this combined approach as \(\widehat{P}_{\mathcal{S}(\boldsymbol{\Delta})}\left(s;~k_{\text{avg}}\right)\). The more accurate Dykstra’s algorithm cannot be used as a precursor here, since it also iterates on auxiliary vectors and its accuracy depends heavily on these vectors, not solely on the iterates. Consequently, stopping early to obtain an intermediate point from Dykstra’s algorithm is meaningless in this context.

To solve the trust-region subproblem (\ref{eq:PSTrustRegionProb}), we employ a modified projected gradient method (Algorithm \ref{alg:modified_pg}), which closely follows the truncated conjugate gradient method. The key difference arises when the iterate $s_k + \alpha_k p_k$ falls outside $\mathcal{S}(\boldsymbol{\Delta})$. Instead of stopping, the algorithm performs an exact line search along the direction $\widehat{P}_{\mathcal{S}(\boldsymbol{\Delta})}(s_k + \alpha_k p_k;~k_{\text{avg}}) - s_k$ starting from $s_k$ to determine $s_{k+1}$. Furthermore, the algorithm alternates between $k_{\text{avg}}^{(1)} = 0$ and $k_{\text{avg}}^{(2)} = 4$ during iterations to mitigate the limitations of using a single approximate projection operator.
The truncation radius \(\widehat{\Delta}\) in Algorithm \ref{alg:modified_pg} is set to $\sqrt{\min\{n,q\}} \cdot \max_{1\leq i \leq q}\{\Delta_{k,i}\}$, which ensures that the trust region is entirely contained within the ball \(\mathcal{B}(\widehat{\Delta})\). In addition, a comparison of \texttt{UPOQA}'s performance with versus without structured trust regions is provided in Appendix \ref{sec:non_struct}, where the non-structured version employs a spherical trust region, and the resulting subproblem is solved via the truncated conjugate gradient method with a boundary improvement step.

\begin{algorithm}
    \caption{Modified Projected Gradient Method for the Trust-Region Subproblem (\ref{eq:PSTrustRegionProb})}\label{alg:modified_pg}
    \KwIn{Overall model $m$, center point $x$, truncation radius $\widehat{\Delta} > 0$, projection operator $\widehat{P}_{\mathcal{S}(\boldsymbol{\Delta})}$, numbers of start-up averaged projection iterations $k_{\text{avg}}^{(1)},~k_{\text{avg}}^{(2)} \geq 0$}
    \KwOut{An approximate solution to (\ref{eq:PSTrustRegionProb})}
    Set the iterate $s_0 \from 0$\;
    Set the search direction $p_0 \from - \nabla m(x + s_0)$\;
    \For{$k = 0, 1, \ldots~\mathbf{until}~\nabla m(x+s_k)^\top  p_k \geq 0$ }{

        Compute $\alpha_k \from \argmin \left\{m(x + s_k + \alpha p_k)\mid \alpha \geq 0,~ \| s_k + \alpha p_k \|\leq \widehat{\Delta}\right\}$\;

        \eIf{$s_k + \alpha^k p_k$ falls outside $\mathcal{S}(\boldsymbol{\Delta})$}{
            $\hat{s}_{k} \from \widehat{P}_{\mathcal{S}(\boldsymbol{\Delta})}\left(s_k + \alpha_k p_k;~k_{\text{avg}}^{(k~\mathrm{mod}~2)}\right)$\;
            $\hat{p}_k\from \hat{s}_k - s_k$\;
            $\hat{\alpha}_k \from \argmin \left\{m(x + s_k + \alpha \hat{p}_k)\mid 0 \leq \alpha \leq 1\right\}$\;
            $s_{k+1} \from s_k + \hat{\alpha}_k \hat{p}_k$\;
            $p_{k+1} \from - \nabla m(x + s_{k+1})$\;
        }{
            $s_{k+1} \from s_k + \alpha_k p_k$\;
            $\beta_{k} \from \left\|\nabla m(x + s_{k+1})\right\|^2 / \left\|\nabla m(x+s_k)\right\|^2$\;
            $p_{k+1} \from- \nabla m(x + s_{k+1}) + \beta_k p_k $\;
        }
    
    }
    
\end{algorithm}

\subsection{Managment of Trust Region Radii}\label{sec:upoqa_adjust_tr}

In Step \ref{algstep:adjust_delta} of Algorithm \ref{alg:UPOQA}, we employ the combined separation criterion proposed by Shahabuddin~\cite[][Section 2.4]{PSDFOThesis} to adjust the trust region radii $\{\Delta_{k,i}\}_{i=1}^q$. Here we briefly introduce this criterion.

In trust region methods, it is common practice to adjust the trust region based on the value of the reduction ratio $r_k = \delta_k f / \delta_k m_k$, where
\begin{equation*}
    \delta_k f = f(x_k) - f(\hat{x}_k),\quad  
    \delta_k m_{k} = m_{k}(x_{k}) - m_{k}(\hat{x}_k).
\end{equation*}
However, when solving partially separable problems, it is infeasible to independently adjust each $\Delta_{k,i}$ solely based on $r_{k,i}$. For an individual objective, we always expect the surrogate model and the objective function to decrease. However, for coexisting element functions, the potential antagonistic relationship between them may necessitate that we sometimes tolerate a slight deterioration in the function values of some elements. This suggests that the algorithm should not simply encourage larger $r_{k,i}$ values. Moreover, the cancellation issue further complicates the problem. Conn et al.~\cite{PSConnTrustRegion} first pointed out that a situation may arise where $\delta_k m_{k,1}, \ldots, \delta_k m_{k,q}$ have large absolute magnitudes, but cancellation occurs, resulting in a very small absolute value for the aggregated $\delta_k m_k$. In such cases, rounding errors in $\delta_k m_{k,1}, \ldots, \delta_k m_{k,q}$ can easily lead to inaccurate computation of $\delta_k m_k$ and to misjudgment of the reliability of the overall model $m_k$.

For above considerations, Shahabuddin~\cite[][Section 2.4]{PSDFOThesis} proposed a scoring-based strategy. The scores consist of a global score \(\tau_k\) and element-wise local scores \(\{\hat{\tau}_{k,i}\}_{i=1}^q\), both taking integer values of 0, 1, or 2. By summing the two parts, the total score (ranging from 0 to 4) determines how each element's trust region radius \(\Delta_{k,i}\) should be adjusted. The global score \(\tau_k\) depends solely on the magnitude of \(r_k\)—the larger \(r_k\), the higher \(\tau_k\). The element-wise score \(\hat{\tau}_{k,i}\) is determined by the position of the point \((\delta_k m_{k,i}, \delta_k f_{i})\). Figure \ref{fig:separation_criterion} illustrates the acceptable region for assigning \(\{\hat{\tau}_{k,i}\}_{i=1}^q\), bounded by two rays with different slopes emanating from the origin and a line with slope 1 that does not pass through the origin. The stricter the acceptable region in which \((\delta_k m_{k,i}, \delta_k f_{i})\) falls, the higher \(\hat{\tau}_{k,i}\). The detailed scoring strategy is described in Algorithm \ref{alg:tr_score}. After scoring, we set $\tau_{k,i} = \tau_k + \hat{\tau}_{k,i}$. The new trust region radii are then updated as follows:

\begin{equation*}
    \Delta_{k+1,i}
    \begin{cases}
        \in [1,~\theta_4] \Delta_{k,i}, & \text{if }~\tau_{k,i} = 4, \\
        \in [1,~\theta_3] \Delta_{k,i}, & \text{if }~\tau_{k,i} = 3, \\
        \in [\theta_2,~1] \Delta_{k,i}, & \text{if }~\tau_{k,i} = 2, \\
        =\theta_2 \Delta_{k,i}, & \text{if }~ \tau_{k,i} = 1, \\
        =\theta_1 \Delta_{k,i}, & \text{if }~ \tau_{k,i} = 0,
    \end{cases}
\end{equation*}

In our implementation, the parameters are set as \(\theta_1 = 0.5\), \(\theta_2 = 1 / \sqrt{2}\), \(\theta_3 = \sqrt{2}\), and \(\theta_4 = 2\). Additionally, Step \ref{algstep:prevent_enlarge} in Algorithm \ref{alg:tr_score} ensures that when the overall model reduction is unsatisfactory (\(r_k < \mu_1\)), at least one element's trust region radius undergoes substantial decrease to prevent the algorithm from stagnating at a fixed radius. Although such cases are rare, they have indeed been observed in our numerical experiments. This step is the only modification we made to Shahabuddin's combined separation criterion~\cite[][Section 2.4]{PSDFOThesis}.

\begin{figure}
    \centering
    \includegraphics[width = 0.55\textwidth]{./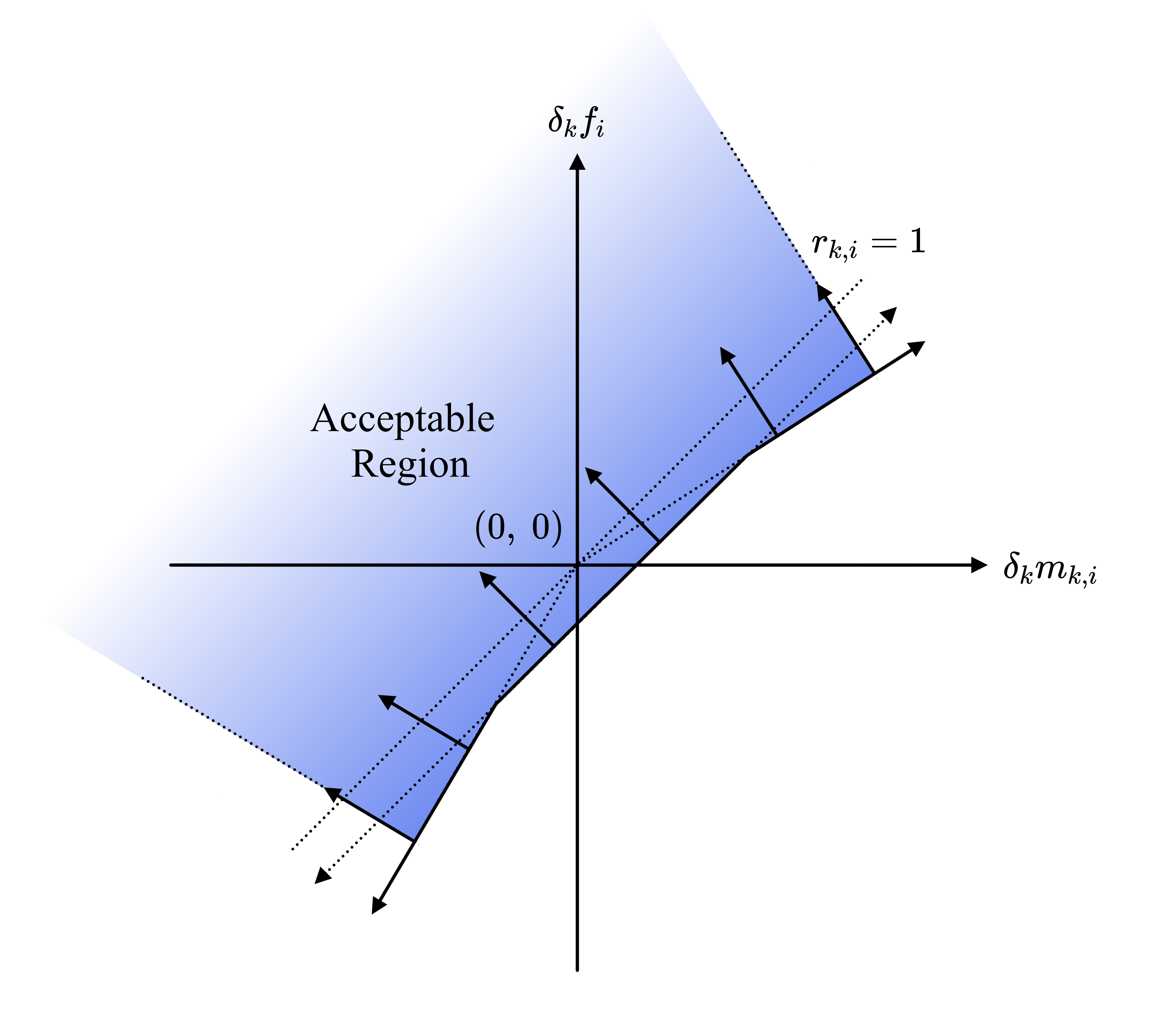}
    \caption{The combined separation criterion proposed by Shahabuddin~\cite[][Section 2.4]{PSDFOThesis} determines whether to expand the elemental trust region radius \(\Delta_{k,i}\) based on the slope \(r_{k,i}\) of the point \((\delta_k m_{k,i},~\delta_k f_{i})\) relative to the origin and its distance to the line \(r_{k,i} = 1\). The colored region in the figure represents the acceptable region for expansion operations.}
    \Description{Fully described in the text.}
    \label{fig:separation_criterion}
\end{figure}

\begin{algorithm}
    \caption{Scoring Strategy for Trust Region Radius Adjustment}\label{alg:tr_score}
    \KwIn{Elemental model reductions $\{\delta_k m_{k,i}\}_{i=1}^q$, elemental reduction ratios $\{r_{k,i}\}_{i=1}^q$, trust region radii $\{\Delta_{k,i}\}_{i=1}^q$, trust region resolution $\rho_k$, overall reduction ratio $r_k$, constants $0 < \mu_1 < \mu_2 < 1$}
    \KwOut{Total scores for each element $\{\tau_{k,i}\}_{i=1}^q$}

    $\displaystyle \zeta_k \gets \frac{\sum_{\delta_k m_{k,i} < 0} \delta_k m_{k,i}}{\sum_{\delta_k m_{k,i} \geq 0} \delta_k m_{k,i}}$\;

    For $i=1,2$, set $\eta_i= - (1 - \mu_i) \zeta_k$, $\displaystyle \alpha_i \gets \frac{(\mu_i + \eta_i)(1+\zeta_k) - 2\zeta_k}{1-\zeta_k}$\;

    \lIf{$r_k \geq \mu_2$}{
    $\tau_k\gets 2$
    }\lElseIf{$r_k \geq \mu_1$}{
    $\tau_k\gets 1$
    }\lElse{
    $\tau_k\gets 0$
    }

    \For{$i = 1,2,\ldots,q$}{
        \eIf{$\delta_k m_{k,i} \geq 0$}{
            \uIf{$r_{k,i} \geq \alpha_2~\mathbf{or}~\delta_k f_{i} \geq \delta_k m_{k,i} - \eta_2 (\delta_k m_{k}) / q$ }{
                $\hat{\tau}_{k,i}\gets 2$\;
            }\uElseIf{$r_{k,i} \geq \alpha_1~\mathbf{or}~\delta_k f_{i} \geq \delta_k m_{k,i} - \eta_1 (\delta_k m_{k}) / q$}{
                $\hat{\tau}_{k,i}\gets 1$\;
            }
            \lElse{
                $\hat{\tau}_{k,i}\gets 0$
            }
        }{
            \uIf{$r_{k,i} \leq 2 - \alpha_2~\mathbf{or}~\delta_k f_{i} \geq \delta_k m_{k,i} - \eta_2 (\delta_k m_{k}) / q$}{
                $\hat{\tau}_{k,i}\gets 2$\;
            }\uElseIf{$r_{k,i} \leq 2 - \alpha_1~\mathbf{or}~\delta_k f_{i} \geq \delta_k m_{k,i} - \eta_1 (\delta_k m_{k}) / q$}{
                $\hat{\tau}_{k,i}\gets 1$\;
            }\lElse{
                $\hat{\tau}_{k,i}\gets 0$
            }
        }
         $\tau_{k,i}\gets \hat{\tau}_{k,i} + \tau_{k}$\;
    }
    \If{$\tau_k = 0$}{
        Verify all $\tau_{k,i}$ to ensure at least one element satisfies $\tau_{k,i} \leq 1$ and $\Delta_{k,i} >\rho_k$. Otherwise, set $\tau_{k,i}$ of the lowest-scoring element with $\Delta_{k,i} > \rho_k$ to $0$\; \label{algstep:prevent_enlarge}
    }
\end{algorithm}\par

\subsection{Selective Updates of Element Models}

After obtaining the solution $s_k$ to the trust-region subproblem (\ref{eq:PSTrustRegionProb}), it is time to update element models and interpolation sets. For the model $m_{k,i}$, the new interpolation point generated by the solution $s_k$ is $\hat{x}_k^{\mathcal{I}_i} = (x_k+s_k)^{\mathcal{I}_i}$. However, blindly adding $\hat{x}_k^{\mathcal{I}_i}$ to the set $\mathcal{Y}_{k,i}$ may deteriorate the poisedness of $\mathcal{Y}_{k,i}$. For example, when the objective function is $f(x,y) = f_1(x) + f_2(y)$ and the starting point is $(x_0,y_0)$, if $x_0$ is already very close to a local minimizer of $f_1$, the computed trust-region step will have vastly different scales in the subspaces corresponding to $x$ and $y$. This can cause a drastic contraction in the geometry of the interpolation set $\mathcal{Y}_1$, potentially leading to numerical instability. If the algorithm detects such a risk, it skips such updates, as implemented in Step \ref{algstep:filter_if} of Algorithm \ref{alg:UPOQA}.

In Powell's DFO methods that based on quadratic models, the denominator $\sigma$ in the update formula for the interpolation system is a crucial parameter~\cite{PowellUpdateInverseKKT}. Its magnitude largely determines the impact of the update on the poisedness of the interpolation system, and a larger value is preferred. Therefore, we decide whether to accept the new interpolation point $\hat{x}_k^{\mathcal{I}_i}$ into $\mathcal{Y}_{k,i}$ by checking the value $\sigma_{k,i}$ during the update of $m_{k,i}$. We first compute $\sigma_{k,i}$ and multiply it by a factor $\gamma_{k,i}\triangleq \min \left\{\left\|s_k^{\mathcal{I}_i}\right\|_2/\rho_{k},~1\right\}$, where $s_k$ is the solution to the trust-region subproblem (\ref{eq:PSTrustRegionProb}) solved in Step \ref{algstep:UPOQAtrProb} of Algorithm \ref{alg:UPOQA}. The factor $\gamma_{k,i}$ penalizes situations with small step updates, as they generally contribute limited improvements to the model's accuracy, even if they don't significantly compromise the system's poisedness. Due to rounding errors, the value of $\sigma_{k,i}$ may become negative. To mitigate this, we check the intermediate variables used in computing $\sigma_{k,i}$ to detect such tendencies, and apply an additional penalty factor $w_{k,i}$ to the value of $\sigma_{k,i}$. This penalty factor is determined heuristically - in short, we reduce positive $\sigma_{k,i}$ values while enlarging negative ones. The resulting value $w_{k,i}\gamma_{k,i}\sigma_{k,i}$ is then compared with a fixed threshold $\xi$. The new interpolation point $\hat{x}_k^{\mathcal{I}_i}$ is accepted only if $w_{k,i}\gamma_{k,i}\sigma_{k,i}$ exceeds the threshold. In \texttt{UPOQA}, the default threshold is set to $\xi = 10^{-5}$. 

In most cases, updates with negative $\sigma_{k,i}$ are automatically rejected. However, an exception occurs in extreme situations where all candidate updates yield negative $\sigma_{k,i}$ values, and the algorithm will accept the update whose $w_{k,i}\gamma_{k,i}\sigma_{k,i}$ is closest to zero. Only then does the additional penalty on negative $\sigma_{k,i}$ values matter. 

\subsection{Starting Point Search}\label{sec:upoqa_start}

As mentioned earlier, based on the element values evaluated at a certain number of interpolation points selected from the grid  
\begin{equation*}
    \left\{x = \sum_{i = 1}^n c_i e_i \mid c_1,\ldots,c_n \in \mathbb{Z}\right\},
\end{equation*}
one can easily compute significantly more overall function values. This idea has been adopted by Price and Toint~\cite{PSMeshPatternSearch}. Although \texttt{UPOQA} cannot directly utilize this search pattern to accelerate the algorithm, it employs similar strategy to select a better starting point for the algorithm.

\texttt{UPOQA} initializes interpolation sets in the same way as \texttt{COBYQA}~\cite{COBYQA}. Suppose $\mathcal{Y}_0 = \{y_0^1,\ldots,y_0^{N}\}$ is the $n$-dimensional interpolation set to be initialized with a capacity of $N = 2n+1$, the initial trust-region radius is $\Delta_0$, and the user-specified starting point is $x_{\text{start}}$. Then, the points $y_0^1,\ldots,y_0^{N}$ are initialized as  
\begin{equation*}
    y_0^i \triangleq
    \begin{cases}
    x_{\text{start}},& \text{if }~ i=1, \\
    x_{\text{start}}+\Delta_0 e_{i-1}, & \text{if }~2\leq i \leq n+1,\\
    x_{\text{start}}-\Delta_0 e_{i-n-1}, & \text{if }~ n+2\leq i \leq 2n+1.
    \end{cases}
\end{equation*}
Similarly, for each elemental interpolation set $\mathcal{Y}_{0,i}$, \texttt{UPOQA} initializes it with $x_{\text{start}}^{\mathcal{I}_i}$ as the center and $\Delta_0$ as the radius. Based on the resulting sets $\{\mathcal{Y}_{0,i}\}_{i=1}^q$, \texttt{UPOQA} then explores a certain number of solutions of a constraint satisfaction problem (CSP) using the minimum remaining values heuristic. In this CSP, the state is a complete point $x \in \Real^n$, and the constraints require that for any $i = 1,\ldots,q$, $P_{\mathcal{R}_i}(x) \in \mathcal{Y}_{0,i}$ must hold. Since the starting point search mechanism has limited impact on improving algorithmic performance, we restrict the search to at most 100 solutions and select the point with the smallest objective function value as the actual starting point $x_0$ for \texttt{UPOQA}.

\subsection{Optional Features}

Besides the procedure demonstrated in Algorithm \ref{alg:UPOQA}, \texttt{UPOQA} also provides two additional functionalities to enhance its flexibility and applicability: restart mechanism and hybrid black-white-box optimization.

\subsubsection{Restart Mechanism}

In model-based derivative-free methods, the trust region radius $\Delta$ or trust region resolution $\rho$ decreasing to a minimal value often indicates algorithm convergence. However, when the objective function contains noise, the noise will dominate the function value fluctuations once $\Delta$ becomes sufficiently small. Consequently, the interpolation model will no longer approximate the objective but model the noise instead. This leads to ineffective descent steps and may even trigger numerical instabilities causing algorithm failure.

Restart mechanisms have been widely adopted in other areas of numerical optimization, such as the conjugate gradient method and \texttt{GMRES}. Cartis et al.'s \texttt{Py-BOBYQA}~\cite{pybobyqa, pybobyqa_restart} introduced this mechanism into model-based DFO methods, with experiments demonstrating its robustness for noise. Inspired by this, \texttt{UPOQA} incorporates the soft restart mechanism similar to that in \texttt{Py-BOBYQA}. This mechanism is only activated upon user request. When numerical singularity or trust-region resolution depletion occurs, the algorithm resets $\Delta$ and $\rho$ to larger values, selectively replaces some existing interpolation points with a few randomly generated ones, and relocates the iterate to a distant position before resuming execution. 

\subsubsection{Hybrid Black-White-Box Optimization}

Real-world partially separable problems may exhibit more complex structures than (\ref{eq:PSProb}). To maximize the algorithm's applicability, we adapt the implementation of \texttt{UPOQA} to handle problems of the form  
\begin{equation}\label{eq:extended_CPS}
    \min_{x\in\Real^n}\ f(x) = f_0(x) + \sum_{i=1}^{q}w_i h_i\left(f_i(x)\right),
\end{equation}
where $f_1, \ldots, f_q$ remain black-box functions as before, while $f_0\colon\Real^n \to \Real$ is a white-box function whose gradient and Hessian can be calculated directly. Each $h_i\colon\Real \to \Real$ is a transformation applied to the element value $f_i(x)$, also treated as white-box, and $w_i \in \Real$ denotes the weight for each element. The transition from (\ref{eq:PSProb}) to (\ref{eq:extended_CPS}) requires minimal algorithmic modifications, as the added white-box components can be directly incorporated into the trust-region subproblem construction, yet it significantly enhances flexibility. Moreover, \texttt{UPOQA} allows users to dynamically update all $w_1, \ldots, w_q$ and $h_1, \ldots, h_q$ via a callback function during execution. This enables real-time trade-offs among multiple elements in multi-objective optimization, as well as effortless adjustment of penalty coefficients in constrained optimization without manual restarting.  

An additional advantage of this adaptation stems from \texttt{UPOQA}'s use of quadratic models for each black-box $f_i$. If $f_i$ inherently and locally resembles a quadratic function, maintaining an accurate surrogate model becomes computationally inexpensive, reducing unnecessary trial-and-error during optimization. Hence, knowledgeable users may preemptively extract known non-quadratic composite parts from $f_i$ and pass them as $h_i$ to the algorithm. This strategy lowers the cost of maintaining interpolation models and ultimately decreases the number of function evaluations required for convergence.

\section{Numerical Experiments}\label{sec:upoqa_experiment}

We briefly describe our testing methodology, the preparation of test problems, and the results of numerical experiments. Our primary evaluation metrics are the performance profile and data profile proposed by Moré and Wild~\cite{more2009benchmarking}, supplemented with an additional speed-up profile to assess the acceleration effect of \texttt{UPOQA} when exploiting partially separable structures. The \texttt{UPOQA} algorithm has been implemented as a Python library named \texttt{upoqa}. We conducted tests on benchmark problems extracted from \texttt{CUTEst}~\cite{gould2015cutest} using the \texttt{S2MPJ}~\cite{s2mpj} tool, along with a set of quantum variational problems. All tests were conducted on a server equipped with a 32-core Hygon C86 7285 CPU and 504 GB of RAM. 

\subsection{Testing Methodology}

Let $\mathcal{S}$ denote the set of all solvers participating in the test, and $\mathcal{P}$ denote the set of test problems used. Each test problem $p\in \mathcal{P}$ has a fixed starting point $x_{0,p}$ and an objective function $f_p$. Given a tolerance $\varepsilon \in (0,1)$, a solver $s\in\mathcal{S}$ is said to converge on problem $p$ with tolerance $\varepsilon$ if and only if its iterate $x$ satisfies
\begin{equation}\label{eq:converge_test}
    f_p(x) \leq f_p^* + \varepsilon \left[f_p(x_{0,p}) - f_p^*\right],
\end{equation}
where $f_p^*$ is the lowest function value achieved by any solver in $\mathcal{S}$ on problem $p$. Let $t_{p,s}$ denote the number of function evaluations required for solver $s$ to converge on problem $p$. If convergence is not achieved, we set $t_{p,s}=\infty$.

The \emph{performance profile}~\cite{more2009benchmarking} is a concept introduced to evaluate and compare different optimization algorithms. This metric assesses which solver has an advantage in computational cost for given test problems. First, the performance ratio is defined as
\begin{equation*}
    r_{p,s} \triangleq
    \frac{t_{p,s}}{\min_{u \in \mathcal{S}}\{t_{p,u}\}},
\end{equation*}
which can be viewed as a relative cost incurred by solver $s$ to solve problem $p$. Then, the performance profile is defined as
\begin{equation*}
    \rho_s(\alpha) \triangleq \frac{1}{\operatorname{card}(\mathcal{P})} \operatorname{card}\left(\left\{p \in \mathcal{P}: r_{p, s} \leq \alpha\right\}\right), \quad \text{for}~ \alpha \geq1,
\end{equation*}
where $\operatorname{card}(\cdot)$ denotes the cardinality of a set. The metric $\rho_s(\alpha)$ can be interpreted as the proportion of problems in $\mathcal{P}$ that solver $s$ can solve with a performance ratio not exceeding $\alpha$. Specifically, $\rho_s(1)$ represents the proportion of problems where solver $s$ converges faster than all other solvers, while $\rho_s(+\infty)$ indicates the proportion of problems that $s$ can solve regardless of cost. Given two solvers $s_1$ and $s_2$, $\rho_{s_1}(\alpha) > \rho_{s_2}(\alpha)$ implies that under the constraints $r_{p,s_1} \leq \alpha$ and $r_{p,s_2} \leq \alpha$, $s_1$ can successfully solve more problems than $s_2$. The value of $\rho_s(\alpha)$ for an individual solver is highly dependent on the competitors present in $\mathcal{S}$. 

In contrast, the \emph{data profile}~\cite{more2009benchmarking} focuses more on the absolute capability of a single solver and is defined as  
\begin{equation*}  
    d_s(\alpha) \triangleq \frac{1}{\operatorname{card}(\mathcal{P})} \operatorname{card}\left(\left\{p \in \mathcal{P}: \frac{t_{p, s}}{n_p+1} \leq \alpha\right\}\right), \quad \text{for}~ \alpha \geq0,  
\end{equation*}  
where $n_p$ denotes the dimension of problem $p$. The metric $d_s(\alpha)$ can be interpreted as the proportion of problems in $\mathcal{P}$ that solver $s$ can solve using no more than $\alpha(n_p + 1)$ function evaluations. Specifically, $d_s(0) = 0$, while $d_s(+\infty)$ represents the proportion of problems that $s$ can solve regardless of computational cost. Given two solvers $s_1$ and $s_2$, $d_{s_1}(\alpha) > d_{s_2}(\alpha)$ implies that, for each problem $p$, when using no more than $\alpha(n_p + 1)$ function evaluations, $s_1$ can successfully solve more problems than $s_2$.  

When \texttt{UPOQA} converges on a given problem, the number of function evaluations for each element may differ. This discrepancy may arise from variations in the number of interpolation points required to initialize the surrogate models or from whether geometry improvement steps are performed on each element model during iterations. Let $t_{p,s}^{(i)}$ denote the number of function evaluations required for element $f_i$ when solver $s$ converges on problem $p$, $t_{p,s}^{\text{wst}}$ the maximum among $t_{p,s}^{(1)}, \ldots, t_{p,s}^{(q)}$, and $t_{p,s}^{\text{avg}}$ their average. If the cost of evaluating the objective function is dominated by a specific element $f_{i_*}$, the overall cost of the algorithm will be determined by $t_{p,s}^{(i_*)}$. Hence, in the most pessimistic scenario, the cost should be measured by $t_{p,s}^{\text{wst}}$ rather than $t_{p,s}^{\text{avg}}$. This is the meaning of the superscript $\text{wst}$ (worst-case). Unless otherwise specified, the numerical experiments in subsequent sections generally use $t_{p,s}^{\text{wst}}$ to quantify the computational cost of \texttt{UPOQA}. When no ambiguity arises, $t_{p,s}^{\text{wst}}$ may be abbreviated as $t_{p}^{\text{wst}}$.

Since \texttt{UPOQA} leveraging partially separable structures is often significantly faster than algorithms that do not utilize such structures—potentially rendering comparisons between the two less meaningful—this paper introduces an alternative metric called the \emph{speed-up profile}, defined as  
\begin{equation*}  
% \label{eq:speedup_profile}
    \mathrm{su}(\alpha) \triangleq 
    \begin{cases}
        \frac{1}{\operatorname{card}(\mathcal{P})} \operatorname{card}\left(\left\{  
        p \in \mathcal{P}:   
        1 \leq c_p \leq \alpha \land \left(  
         t_{p}^{\text{single}} < \infty \lor  
        t_{p}^{\text{wst}} < \infty  
    \right)\right\}\right),&\quad \text{for}~ \alpha \geq 1, \\ 
    \frac{1}{\operatorname{card}(\mathcal{P})} \operatorname{card}\left(\left\{  
        p \in \mathcal{P}:   
        \alpha \leq c_p < 1 \land \left(   
         t_{p}^{\text{single}} < \infty \lor
        t_{p}^{\text{wst}} < \infty  
    \right)\right\}\right),&\quad \text{for}~ 0 \leq \alpha < 1,
    \end{cases}
\end{equation*}  
where 
\begin{equation}\label{eq:rela_speedup}
    c_p \triangleq \frac{  
            t_{p}^{\text{single}} / t_{p}^{\text{wst}}   
        }{  
            n_p ~/ \max\limits_{i=1,\ldots,q_p} n_{p,i}  
        }
\end{equation}
and \( t_{p}^{\text{single}} \) represents the number of function evaluations needed when the objective is treated as a single element and passed to \texttt{UPOQA}. Here, \( n_{p,i} \) is the dimension of the element $f_i$ for problem \( p \). Thus, \( t_{p}^{\text{single}} / t_{p}^{\text{wst}} \) measures the actual speed-up achieved by exploiting the partially separable structure, while \( n_p / \max\limits_{i=1,\ldots,q_p} n_{p,i} \) serves as a predicted speed-up ratio. To illustrate, consider an $n$-dimensional objective function formed by summing $n/\hat{n}$ low-dimensional subfunctions, each depending on distinct $\hat{n}$ variables. The Hessian is block-diagonal with identical $\hat{n} \times \hat{n}$ blocks. For gradient-based methods using forward-difference numerical gradients, exploiting this structure reduces per-iteration function evaluations from $O(n)$ to $O(\hat{n})$—each subgradient requires $\hat{n}+1$ evaluations rather than $n+1$ for the full gradient. Assuming comparable iteration counts, this induces a predicted speed-up ratio of $n/\hat{n}$. For general problems, the elemental dimensions \( n_1, \ldots, n_q \) may vary. In such cases, we assume that the element with largest dimension dominates the cost, making \( n~/ \max\limits_{i=1,\ldots,q_p} n_{i} \) a conservative prediction of the speed-up ratio. Notably, if any element has the same dimension as the objective function (i.e., \( \max\limits_{i=1,\ldots,q_p} n_{i} = n \)), \texttt{UPOQA} may fail to exploit any intrinsic low-dimensionality, resulting in a predicted speed-up ratio close to 1. 

The expression (\ref{eq:rela_speedup}) can be interpreted as a relative speed-up ratio, indicating whether \texttt{UPOQA} delivers the expected performance gain. A relative speed-up ratio greater than 1 suggests that revealing the partially separable structure not only allows the algorithm to exploit intrinsic low-dimensionality for reduced modeling cost but also enables the distillation of the primary nonconvexity into relatively low-dimensional elements, further lowering the actual cost. If \texttt{UPOQA} fails to converge regardless of whether the partially separable structure is utilized, such cases are excluded when computing \( \mathrm{su}(\alpha) \). 

To present the results more intuitively, we design \(\mathrm{su}(\alpha)\) to be computed across two distinct regimes: \(\alpha \geq 1\) and \(0 \leq \alpha < 1\). For \(\alpha \geq 1\), \(\mathrm{su}(\alpha)\) represents the proportion of problems where \texttt{UPOQA}, after exploiting the partially separable structure, achieves a relative speed-up ratio \(c_p \geq 1\) while maintaining \(c_p \leq \alpha\). A higher value in this regime is desirable, and the corresponding curve in the speed-up profile plot should ideally cluster toward the upper-left region. Specifically, \(\mathrm{su}(+\infty)\) quantifies the proportion of problems with \(c_p \geq 1\). For the regime \(0 \leq \alpha < 1\), \(\mathrm{su}(\alpha)\) measures the proportion of problems where \(c_p < 1\) yet \texttt{UPOQA} attains a relative speed-up ratio no less than \(\alpha\), while \(\mathrm{su}(0)\) indicates the proportion of problems with \(c_p < 1\). 

Furthermore, we always expect the plotted speed-up profile curve to exhibit a roughly balanced distribution across $\alpha = 1$, and to rise rapidly on both sides of it. This implies that for the majority of problems, the predicted speed-up ratio \( n_p / \max\limits_{i=1,\ldots,q_p} n_{p,i} \) approximates the actual speed-up ratio achieved.

\subsection{Test Results on \texttt{CUTEst} Problems}\label{sec:exp_results_on_cutest}

\subsubsection{Test Problems}

Since its release in 1995, the \texttt{CUTEst} testing environment and problem set~\cite{gould2015cutest} has been widely used by researchers in numerical optimization for designing, testing, and comparing both constrained and unconstrained optimization algorithms. Each test problem is described in the \emph{standard input format} (SIF), which utilizes the group partially separable structure~\cite{LANCELOT} defined as:
\begin{equation}\label{eq:groupPS}
   {f(x)}=\sum_{i \in \mathcal{G}_{\text {obj }}} \frac{F_i\left(a_i(x), \omega_i\right)}{\sigma_i}+\frac{1}{2} x^\top H x,
\end{equation}
where each term $F_i\left(a_i(x), \omega_i\right)/\sigma_i$ is called an objective-function group, $H\in \Real^{n\times n}$ is symmetric, $\mathcal{G}_{\text{obj}}$ is an index set, and $F_i$ is a linear or nonlinear function of $a_i(x)$ and $\omega_i$. We employ the \texttt{S2MPJ} tool~\cite{s2mpj} as the parser for SIF files to create partially separable versions of these problems with Python based on the group partially separable structure (\ref{eq:groupPS}). In our implementation, each $F_i\left(a_i(x), \omega_i\right)/\sigma_i$ forms an element, and the quadratic term $x^\top H x/2$ constitutes a separate element, together forming the partially separable structure of a problem. 

We conducted a screening of available \texttt{CUTEst} problems, excluding those that are inconvenient for testing or have undesirable structures, including:
\begin{itemize}
    \item Constrained problems;
    \item Problems with excessively large average elemental dimension $\overline{n} = \left(\sum_{i=1}^q n_i\right)/q$, indicating a lack of intrinsic low-dimensionality;
    \item Problems with too many ($q \gg n$) or too few ($q = 1$ or $2$) elements;
    \item Problems requiring excessively long construction time;
    \item Problems that are highly difficult to optimize, where none of the solvers in $\mathcal{S}$ can converge within the given evaluation budget;
    \item Problems with excessively small or large dimensions.
\end{itemize}

Subsequently, we selected a test set of medium-scale problems, whose basic information is provided in Table \ref{tab:cutest_probs} in Appendix \ref{app:probs}. This set comprises 85 partially separable problems with dimensions $n \in [21,~100]$, most of which are 50-dimensional. These problems typically contain a substantial number of elements, with many satisfying $q \geq n$. Whether this structure is sufficiently representative of real-world applications remains debatable. In contrast, the quantum variational problems discussed later satisfy $q \ll n$.

\subsubsection{Test Results}

We tested the \texttt{UPOQA}, \texttt{UPOQA (single)}, \texttt{NEWUOA}, and \texttt{L-BFGS-B (ffd)} algorithms on the 85 problems included in Table \ref{tab:cutest_probs} in Appendix \ref{app:probs}. Here, \texttt{UPOQA (single)} denotes optimizing the objective function as a whole element without exploiting the partially separable structure. The \texttt{NEWUOQA}~\cite{NEWUOAsoftware} implementation was from version 2.8.0 of Python's \texttt{nlopt} library, while \texttt{L-BFGS-B} was from version 1.14.1 of the \texttt{scipy} library, using forward-finite-difference gradients for the objective with a default differential step size of $1.49\times 10^{-8}$. For each problem, the maximum number of function evaluations allowed was set to $\max\{1000 n, 10000\}$, with the same limit applied to each element function evaluation in \texttt{UPOQA}. For the convergence criterion (\ref{eq:converge_test}), we selected $\varepsilon = 10^{-1},~10^{-3},~10^{-5}$, and $10^{-7}$, obtaining the performance profiles in Figure \ref{fig:perf_profiles_main_problems} and the data profiles shown in Figure \ref{fig:data_profiles_main_problems}. Table \ref{tab:5_prob_result} lists the number of function evaluations required for convergence on the five problems of largest dimensions in the test set, where $\infty$ indicates failure to converge within the evaluation budget.

% The complete test results are presented in Table \ref{tab:complete_results} in the Appendix \ref{appendix:results}.

\renewcommand{\arraystretch}{1.2}

\begin{table}
    \caption{Function evaluations required for convergence by \texttt{UPOQA}, \texttt{UPOQA (single)}, \texttt{NEWUOA}, and \texttt{L-BFGS-B (ffd)} on the five largest-dimensional problems from the test set in Table \ref{tab:cutest_probs} in Appendix \ref{app:probs}, with convergence tolerances $\varepsilon = 10^{-1},~10^{-3},~10^{-5}$, and $10^{-7}$. For \texttt{UPOQA}, the reported evaluations are the maximum evaluations across all elements. Here, $n$ denotes the problem dimension, $q$ denotes the number of elements in its partially separable structure, $n_1,\ldots,n_q$ are dimensionalities of elements, and $\infty$ indicates failure to converge within the evaluation budget.}
    \centering
    \small
    % \footnotesize
    % \scriptsize
    {\begin{tabular}{cccccccccc}
    % \toprule
    \Xhline{1pt}

    \multirow{2}{*}{Problem}        & \multirow{2}{*}{$n$} & \multirow{2}{*}{$q$} &  \multirow{2}{*}{$\max n_i$} & \multirow{2}{*}{$\sum n_i /q$} & \multirow{2}{*}{$\log_{10}\varepsilon~$} &  \multicolumn{4}{c}{Evaluations}          \\ \Xcline{7-10}{0.3pt} 
    &      &    &  &   &   &  \texttt{UPOQA} & \texttt{UPOQA (single)} & \texttt{NEWUOA} & \texttt{L-BFGS-B (ffd)} \\

    \Xhline{0.7pt}
    % \midrule
    % \hline
    \multirow{4}{*}{HIMMELBI} & \multirow{4}{*}{100} & \multirow{4}{*}{20} & \multirow{4}{*}{5} & \multirow{4}{*}{5.00} & -1 & \textbf{24}   & 380           & 370   & 506          \\
                            &                     &               & & & -3                   & \textbf{102}   & 2720           & 1400   & 1213          \\
                            &                     &               & & & -5                   & \textbf{171}   & 3496           & 2437   & 1920          \\
                            &                     &               & & & -7                   & \textbf{273}   & 5648           & 3305   & $\infty$         \\   
    % \midrule
    \hline
    \multirow{4}{*}{JANNSON3} & \multirow{4}{*}{100} & \multirow{4}{*}{102} & \multirow{4}{*}{2} & \multirow{4}{*}{1.01} & -1 & \textbf{5}   & 204           & 204   & 203          \\
                            &                     &               & & & -3                   & \textbf{7}   & 219           & 253   & 506          \\
                            &                     &               & & & -5                   & \textbf{10}   & 420           & 781   & 708          \\
                            &                     &               & & & -7                   & \textbf{16}   & 643           & 1094   & 809          \\
    % \midrule
    \hline
    \multirow{4}{*}{LUKSAN21LS} & \multirow{4}{*}{100} & \multirow{4}{*}{100} & \multirow{4}{*}{3} & \multirow{4}{*}{2.98} & -1 & \textbf{523}   & 34455           & 20453   & 40704          \\
                            &                     &               & & & -3                   & \textbf{541}   & 40583           & 25593   & 45047          \\
                            &                     &               & & & -5                   & \textbf{560}   & 48012           & 32122   & 50097          \\
                            &                     &               & & & -7                   & \textbf{580}   & 57425           & 38317   & 55854          \\
    % \midrule
    \hline
    \multirow{4}{*}{LINVERSE} & \multirow{4}{*}{99} & \multirow{4}{*}{147} & \multirow{4}{*}{4} & \multirow{4}{*}{2.98} & -1 & \textbf{10}   & 204           & 218   & 301          \\
                            &                     &               & & & -3                   & \textbf{46}   & 1520           & 1380   & 1101          \\
                            &                     &               & & & -5                   & \textbf{95}   & 3876           & 2975   & 2501          \\
                            &                     &               & & & -7                   & \textbf{112}   & 8230           & 5220   & 3801          \\
    % \midrule
    \hline
    \multirow{4}{*}{HYDC20LS} & \multirow{4}{*}{99} & \multirow{4}{*}{99} & \multirow{4}{*}{14} & \multirow{4}{*}{7.47} & -1 & \textbf{133}   & 528           & 471   & 1001          \\
                            &                     &               & & & -3                   & \textbf{1322}   & 14818           & 8456   & 13801          \\
                            &                     &               & & & -5                   & \textbf{13958}   &  $\infty$            &  $\infty$    &  $\infty$           \\
                            &                     &               & & & -7                   & \textbf{22350}   &  $\infty$           &  $\infty$    &  $\infty$           \\
    % \bottomrule 
    \Xhline{1pt}
\end{tabular}}
\label{tab:5_prob_result}
\end{table}

\begin{figure}
    \centering
    \subfloat[$\varepsilon = 10^{-1}$]{\includegraphics[width = 0.45\textwidth]{./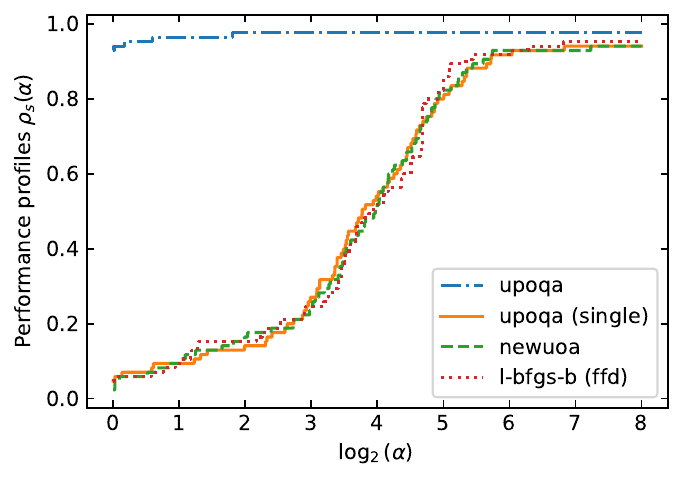}}
    \hfill
    \subfloat[$\varepsilon = 10^{-3}$]{\includegraphics[width = 0.45\textwidth]{./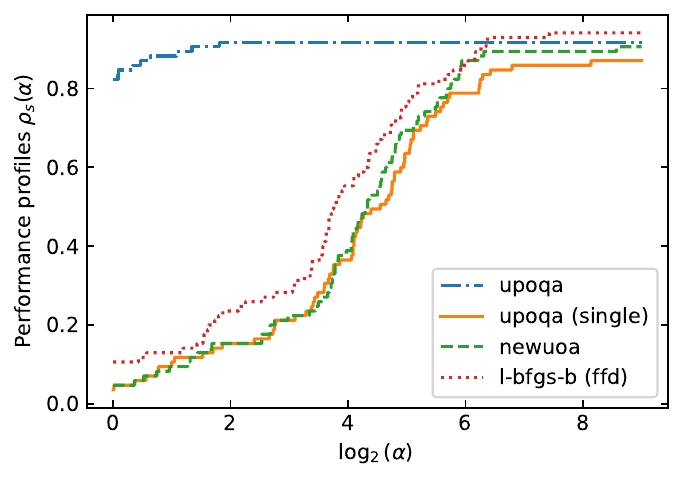}}

    \subfloat[$\varepsilon = 10^{-5}$]{\includegraphics[width = 0.45\textwidth]{./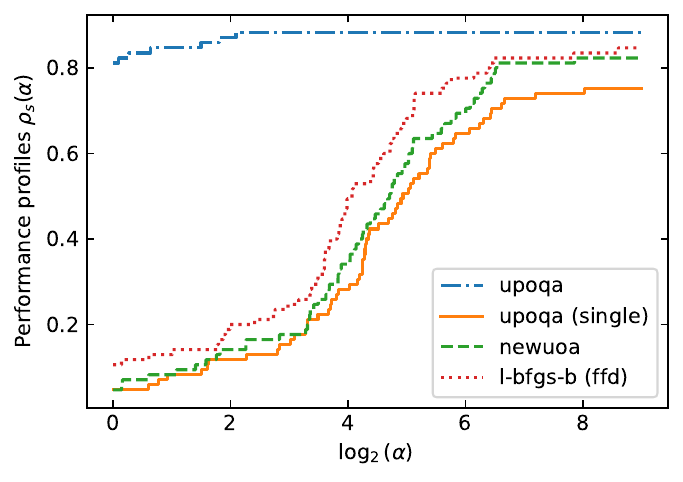}}
    \hfill
    \subfloat[$\varepsilon = 10^{-7}$]{\includegraphics[width = 0.45\textwidth]{./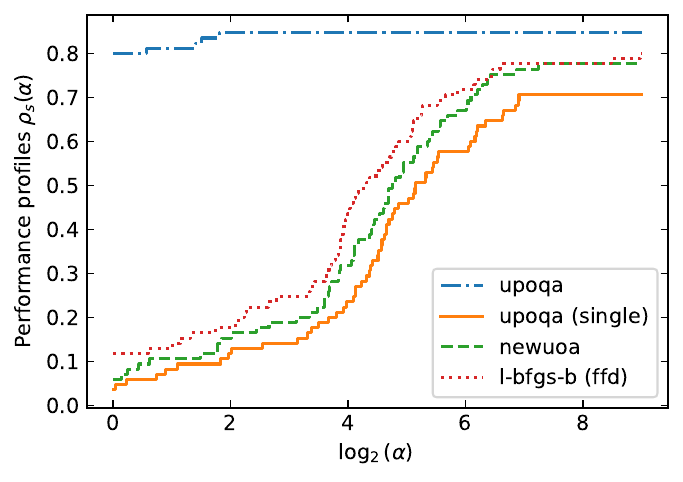}}
    \caption{
        Performance profiles of \texttt{UPOQA}, \texttt{UPOQA (single)}, \texttt{NEWUOA}, and \texttt{L-BFGS-B (ffd)} on the test problems from Table \ref{tab:cutest_probs} in Appendix \ref{app:probs}, with convergence tolerances $\varepsilon = 10^{-1},~10^{-3},~10^{-5}$, and $10^{-7}$.
    }
    \Description{Fully described in the text.}
    \label{fig:perf_profiles_main_problems}
\end{figure}

Figures \ref{fig:perf_profiles_main_problems} and \ref{fig:data_profiles_main_problems}, along with Table \ref{tab:5_prob_result}, demonstrate that \texttt{UPOQA} achieves significant computational acceleration by effectively exploiting the partially separable structure. At tolerance levels of $\varepsilon = 10^{-1}, 10^{-5}$ and $10^{-7}$, \texttt{UPOQA} achieves success rates of 97.6\%, 88.2\% and 84.7\% respectively, surpassing all competing solvers. When $\varepsilon = 10^{-3}$, \texttt{UPOQA} solves 91.8\% of problems, marginally lower than the 94.1\% of \texttt{L-BFGS-B (ffd)}, yet outperforming both \texttt{UPOQA (single)} and \texttt{NEWUOA} . Furthermore, for successfully solved problems, \texttt{UPOQA} requires substantially fewer function evaluations compared to other methods. While \texttt{UPOQA (single)} exhibits lower success rates than \texttt{NEWUOA} and \texttt{L-BFGS-B (ffd)}, this is expected as it is not specifically designed for single-objective optimization. Remarkably, for all tolerances, \texttt{UPOQA} solves at least 80.0\% of problems with fewest evaluations, reaching 92.9\% at $\varepsilon = 10^{-1}$. 

\begin{figure}
    \centering
    \subfloat[$\varepsilon = 10^{-1}$]{\includegraphics[width = 0.45\textwidth]{./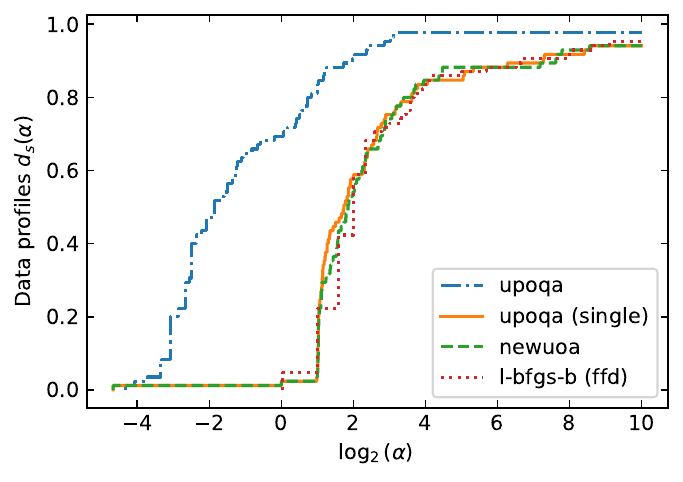}}
    \hfill
    \subfloat[$\varepsilon = 10^{-3}$]{\includegraphics[width = 0.45\textwidth]{./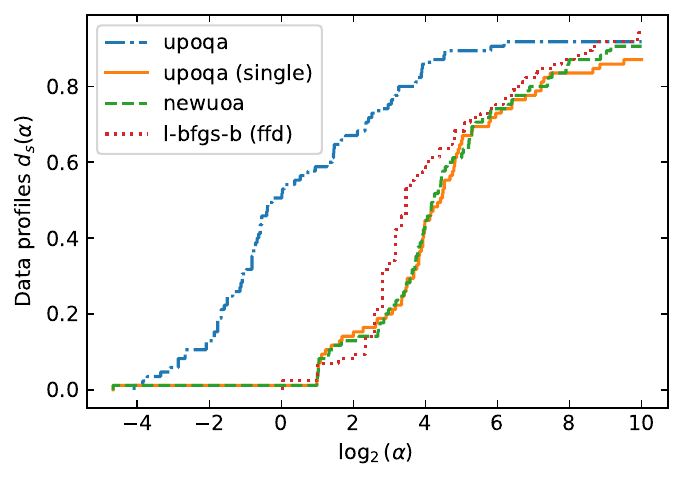}}

    \subfloat[$\varepsilon = 10^{-5}$]{\includegraphics[width = 0.45\textwidth]{./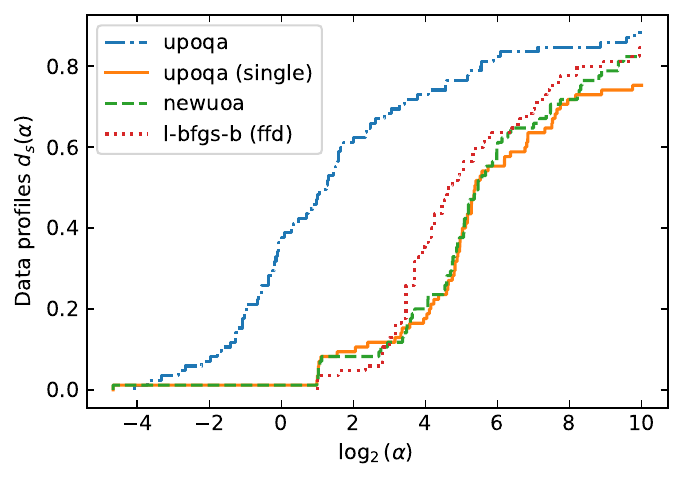}}
    \hfill
    \subfloat[$\varepsilon = 10^{-7}$]{\includegraphics[width = 0.45\textwidth]{./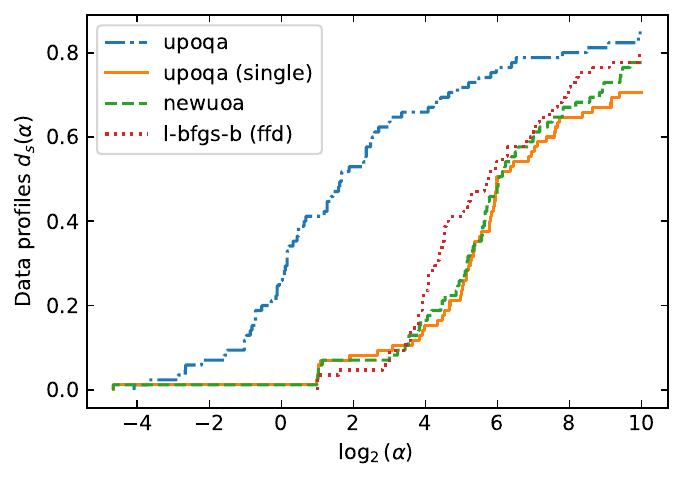}}
    \caption{Data profiles of \texttt{UPOQA}, \texttt{UPOQA (single)}, \texttt{NEWUOA}, and \texttt{L-BFGS-B (ffd)} on the test problems from Table \ref{tab:cutest_probs} in Appendix \ref{app:probs}, with convergence tolerances $\varepsilon = 10^{-1},~10^{-3},~10^{-5}$, and $10^{-7}$.}
    \Description{Fully described in the text.}

\label{fig:data_profiles_main_problems}
\end{figure}

To further evaluate the acceleration effect of \texttt{UPOQA}, we present the speed-up profiles computed by \texttt{UPOQA} and \texttt{UPOQA (single)} in Figure \ref{fig:speed-up_profiles_main_problems}. At a low convergence tolerance ($\varepsilon = 10^{-1}$), the profile shows that among problems where at least one algorithm converged, 70.0\% exhibited relative speed-up ratios between 0.5 and 2 (i.e., the gray-shaded region~$[-1,1]$ on the horizontal axis), indicating reasonable alignment between predictions and empirical results. At the same time, \texttt{UPOQA} achieved lower speed-up than predicted on 58.8\% of problems, outperformed predictions on 35.3\%, while failing to converge within the budget on the remaining 5.9\%—regardless of exploiting partially separable structures. Thus, \texttt{UPOQA}'s acceleration advantage is limited when only low-precision solutions are required. However, at higher precision ($\varepsilon = 10^{-4}$), its acceleration becomes significantly enhanced, with a notably increased (from 35.3\% to 50.6\%) proportion of problems distributed on the $\log_2(\alpha) \geq 0$ side,  and the proportion of problems with $\log_2(\alpha) < 0$ decreases markedly from 58.8\% to 28.2\%—a consequence of reduced solve success rates under stricter precision demands. When tolerance tightens further to $\varepsilon = 10^{-7}$, the outperformance ratio declines slightly to 45.9\%. Overall, the speed-up of \texttt{UPOQA} is less significant in low-precision scenarios but becomes more pronounced in high-precision contexts.

\begin{figure}
    \centering
    \includegraphics[width = 0.625\textwidth]{./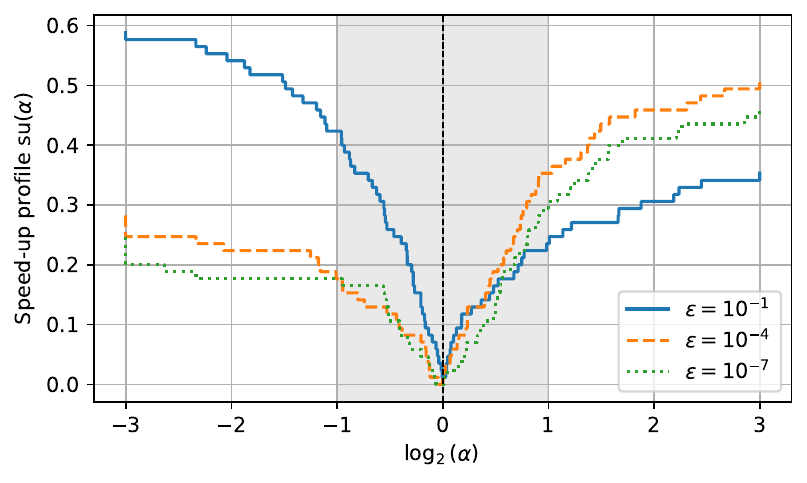}
    \caption{Speed-up profiles computed by \texttt{UPOQA} and \texttt{UPOQA (single)} on the test problems from Table \ref{tab:cutest_probs} in Appendix \ref{app:probs}, with convergence tolerances $\varepsilon = 10^{-1},~10^{-4}$, and $10^{-7}$.}
    \label{fig:speed-up_profiles_main_problems}
    \Description{Fully described in the text.}
\end{figure}

\subsection{Test Results on Quantum Variational Problems}

With the advancement of quantum technology, the \emph{variational quantum algorithm} (VQA) has garnered increasing attention as a hybrid quantum-classical approach for solving optimization problems~\cite{VQA}. In brief, given an objective function, the first step of VQA involves constructing a parameterized quantum circuit on a quantum computer to represent this function. Classical derivative-free optimization algorithms are then employed to adjust these parameters and ultimately find the minimum of the objective. Now, let us consider a quantum variational problem:
\begin{equation}\label{eq:quant_prob_intro}
\min_{\theta_1,\ldots,\theta_p\ \in\ \Real^{n_\theta} } \quad \sum_{i = 1}^p w_i\left\langle \phi_i \left| U(\theta_i)^\dagger \widehat{H} U(\theta_i) \right| \phi_i \right\rangle + \lambda
\sum_{i < j} \left|\langle \phi_i \left| U(\theta_i)^\dagger U(\theta_j) \right| \phi_j \rangle \right|^2,
\end{equation}
where $\widehat{H}$ denotes the Hamiltonian operator of the molecular system, $U(\theta_1),\ldots,U(\theta_p)$ represent the parameterized ansatz quantum circuits with parameters $\theta_1,\ldots,\theta_p$ respectively, $\phi_1,\ldots,\phi_p$ are the initial quantum states, $w_1,\ldots,w_p >0$ are the weights, and $\lambda > 0$ is the penalty coefficient. The goal of this problem is to determine the states corresponding to the smallest $p$ energy levels of a given molecular system.

In equation (\ref{eq:quant_prob_intro}), each term under the summation represents the measurement outcome of a quantum circuit, which is typically quite costly. We treat each term in (\ref{eq:quant_prob_intro}) as a black box and reformulate the problem as:
\begin{eqnarray}\label{eq:quant_prob_bb}
    \min_{\theta_1,\ \ldots,\ \theta_p\ \in\ \Real^{n_\theta} }
    \ \sum_{i=1}^p f_i(\theta_i) + 
    \sum_{1\leq i< j \leq p} f_{ij}(\theta_i,\theta_j),
\end{eqnarray}
where $f_i(\theta_i) = w_i\left\langle \phi_i \left| U(\theta_i)^\dagger \widehat{H} U(\theta_i) \right| \phi_i \right\rangle$, $f_{ij}(\theta_i,\theta_j) = \lambda \left|\left\langle \phi_i \left| U(\theta_i)^\dagger U(\theta_j) \right| \phi_j \right\rangle \right|^2$. Clearly, this is a coordinate partially separable problem, where the objective function is $np$-dimensional and consists of \( p(p+1)/2 \) elements with a maximum dimensionality of \( 2n \) and exhibits a dense Hessian matrix. Similar structures are common in other quantum variational problems~\cite{QOMM, QOMM2}.

We conducts numerical experiments on problem (\ref{eq:quant_prob_bb}) for \(\text{H}_2\) and a model consisting of four hydrogen atoms arranged in a square lattice, which is denoted as \(\text{H}_4\), in the STO-3G basis with UCCSD~\cite{UCCSD} ansatz circuits. The UCCSD block pattern is repeated for three times in order to increase the expressiveness of the ansatz. We set \( p = 3 \), indicating that the goal is to compute the three lowest-energy excited states and their state energies, with weights set as $w_i = i$ for $i=1, 2, 3$. The set of initial states \( \phi_1, \phi_2, \phi_3 \) and all molecular Hamiltonians are chosen to be the Hartree-Fock state and low-lying single-particle excitations above it. The starting points \( \theta_{0,1}, \theta_{0,2}, \theta_{0,3} \) are randomly sampled according to a uniform distribution on $[-2\pi, 2\pi)$. Both experiments are performed in a noiseless environment. The experimental code is implemented in Python using \texttt{qiskit} version 0.37.0, with quantum circuit designs consistent with those in the quantum orbital minimization method (\texttt{qOMM})~\cite{QOMM}. In the \(\text{H}_2\) experiment, the interatomic distance is set to 0.735 \AA, and the penalty coefficient is empirically chosen as \( \lambda = 12 \). For the \(\text{H}_4\) experiment, the interatomic distance is 1.23 \AA, with \( \lambda = 30 \).

\begin{figure}
    \centering
        \subfloat[$\text{H}_2$ molecule]{\includegraphics[width = 0.44\textwidth]{./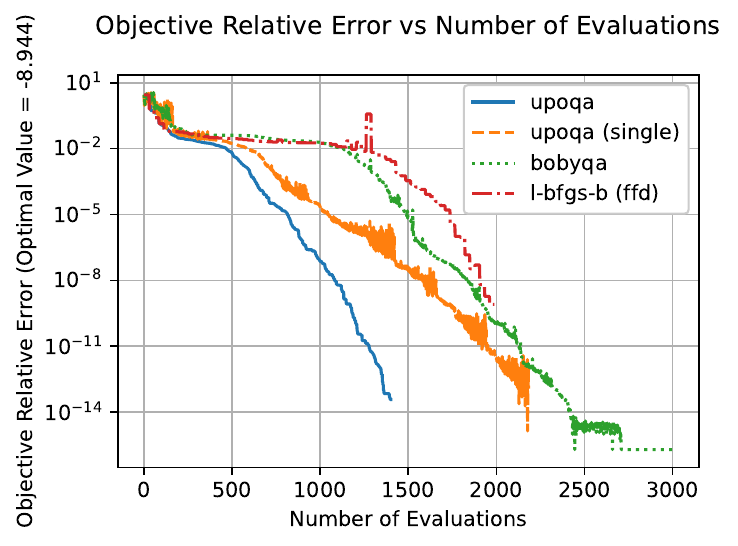}}
        \hfill
        \subfloat[$\text{H}_4$ molecule]{\includegraphics[width = 0.44\textwidth]{./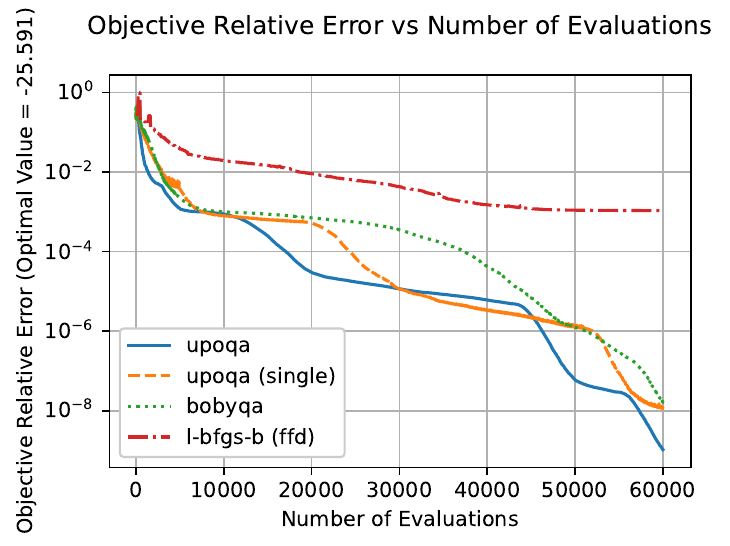}}
    \caption{Objective function value descent curves when optimizing the noiseless quantum variational problem (\ref{eq:quant_prob_bb}) for $\text{H}_2$ and $\text{H}_4$ molecular systems using \texttt{UPOQA}, \texttt{UPOQA (single)}, \texttt{NEWUOA}, and \texttt{L-BFGS-B (ffd)}.}
    \Description{Fully described in the text.}
    \label{fig:Q_problems}
\end{figure}

The experimental results from both tests are shown in Fig \ref{fig:Q_problems}. \texttt{UPOQA} demonstrates superior performance on both problems, with more pronounced acceleration effects in the $\text{H}_2$ experiment. Since the predicted acceleration rate is $p/2 = 1.5$, the results align well with theoretical predictions. In both experiments, all solvers encountered "plateau periods" where the objective function value stagnated, but \texttt{UPOQA} consistently escaped these phases faster and achieved solutions with fewer function evaluations. These results indicate that leveraging the internal structure of problem (\ref{eq:quant_prob_bb}) indeed leads to acceleration, and suggest that \texttt{UPOQA} could deliver outstanding performance on larger-scale problems and other general quantum variational problems.

\section{Conclusion}\label{sec:conclusion}

We propose the \texttt{UPOQA} algorithm for derivative-free optimization of partially separable problems. Based on quadratic interpolation models and a trust-region framework, \texttt{UPOQA} constructs underdetermined quadratic models for each element function, employing the Steinmetz projection and a modified projected gradient method to solve structured trust-region subproblems. The implementation incorporates Powell's techniques~\cite{PowellUpdateInverseKKT, PowellDevNEWUOA}, resulting in low iteration costs and strong robustness. The trust-region radius management strategy is modified from Shahabuddin's criterion~\cite[][Section 2.4]{PSDFOThesis}. Additionally, \texttt{UPOQA} features practical functionalities such as starting point search, restart mechanism, and hybrid black-white-box optimization. Comprehensive numerical experiments on \texttt{CUTEst} problems and a set of quantum variational problems validate the algorithm's effectiveness and robustness in problem-solving.

We acknowledge several aspects of \texttt{UPOQA} that warrant further research and improvement. The modified projected gradient method currently exhibits limited single-step descent rates and requires excessive iterations, leading to significant runtime overhead in some scenarios. Theoretical analysis remains incomplete, particularly regarding convergence rates and their relationship with partially separable structures. Experimentally, we aspire to test the algorithm on more real-world applications, as the artificially constructed partially separable structures from \texttt{CUTEst} test problems may lack sufficient representativeness. Future enhancements could potentially incorporate state-of-the-art DFO techniques such as sample averaging and regression models~\cite{pybobyqa}, as well as preconditioning and subspace methods~\cite{NEWUOAs}, which might further improve \texttt{UPOQA}'s performance.

\bibliographystyle{ACM-Reference-Format}
\bibliography{paper}

\appendix

\section{Test Problems}\label{app:probs}

Table \ref{tab:cutest_probs} lists all the test problems used in Section \ref{sec:exp_results_on_cutest}. These problems were extracted from the \texttt{CUTEst} problem set \cite{gould2015cutest} using the \texttt{S2MPJ} tool \cite{s2mpj}.

\begin{table}
    \caption{\texttt{CUTEst} test problems used in Section \ref{sec:exp_results_on_cutest}, where $n$ denotes the problem dimension, $q$ denotes the number of elements, and $n_1,\ldots,n_q$ are the dimensions of elements.}
    \label{tab:cutest_probs}
    \footnotesize
    % \scriptsize
    \begin{tabular}{rrrrr|rrrrr}
        % \Xcline{1-10}{0.8pt}
        \toprule
        Problem & $n$ & $q$ & $\max n_i$ & $\sum n_i /q$ & Problem & $n$ & $q$ & $\max n_i$ & $\sum n_i /q~$ \\ 
        \midrule
        % \hline
        ANTWERP & 27 & 19 & 19 & 7.74 & ARWHEAD & 50 & 98 & 2 & 1.50 \\
        BDQRTIC & 50 & 92 & 5 & 3.00 & BIGGSB1 & 50 & 51 & 2 & 1.96 \\
        BROYDN3DLS & 50 & 50 & 3 & 2.96 & BROYDNBDLS & 50 & 50 & 7 & 6.68 \\
        CHNROSNB & 50 & 98 & 2 & 1.50 & CHNRSNBM & 50 & 98 & 2 & 1.50 \\
        COSINE & 50 & 49 & 2 & 2.00 & CURLY10 & 50 & 50 & 11 & 9.90 \\
        CURLY20 & 50 & 50 & 21 & 16.80 & CURLY30 & 50 & 50 & 31 & 21.70 \\
        CVXQP1 & 50 & 50 & 3 & 2.94 & CYCLOOCFLS & 86 & 60 & 6 & 5.73 \\
        CYCLOOCTLS & 90 & 60 & 6 & 6.00 & DIXON3DQ & 50 & 50 & 2 & 1.96 \\
        DQRTIC & 50 & 50 & 1 & 1.00 & DTOC2 & 88 & 15 & 6 & 5.87 \\
        EG2 & 50 & 50 & 2 & 1.96 & ENGVAL1 & 50 & 98 & 2 & 1.50 \\
        ERRINROS & 50 & 98 & 2 & 1.50 & ERRINRSM & 50 & 98 & 2 & 1.50 \\
        EXTROSNB & 50 & 50 & 2 & 1.98 & FLETBV3M & 50 & 102 & 50 & 1.96 \\
        FLETCBV2 & 50 & 151 & 2 & 1.32 & FLETCHCR & 50 & 98 & 2 & 1.50 \\
        FREUROTH & 50 & 98 & 2 & 2.00 & GILBERT & 50 & 50 & 1 & 1.00 \\
        HATFLDC & 25 & 25 & 2 & 1.92 & HIMMELBI & 100 & 20 & 5 & 5.00 \\
        HUESmMOD & 50 & 50 & 1 & 1.00 & HYDC20LS & 99 & 99 & 14 & 7.47 \\
        HYDCAR6LS & 29 & 29 & 14 & 6.69 & INDEFM & 50 & 98 & 3 & 1.98 \\
        JANNSON3 & 100 & 102 & 2 & 1.01 & JANNSON4 & 50 & 52 & 2 & 1.02 \\
        LEVYMONT10 & 50 & 100 & 2 & 1.49 & LEVYMONT6 & 50 & 100 & 2 & 1.49 \\
        LIARWHD & 50 & 100 & 2 & 1.49 & LINVERSE & 99 & 147 & 4 & 2.98 \\
        LUKSAN21LS & 100 & 100 & 3 & 2.98 & LUKVLI10 & 50 & 50 & 2 & 2.00 \\
        LUKVLI11 & 50 & 64 & 2 & 1.25 & LUKVLI13 & 50 & 48 & 2 & 1.67 \\
        LUKVLI14 & 50 & 64 & 2 & 1.25 & LUKVLI1 & 50 & 98 & 2 & 1.50 \\
        LUKVLI2 & 50 & 125 & 2 & 1.62 & LUKVLI3 & 50 & 96 & 2 & 2.00 \\
        LUKVLI4C & 50 & 120 & 2 & 1.60 & LUKVLI5 & 52 & 50 & 3 & 3.00 \\
        LUKVLI8 & 50 & 40 & 5 & 4.00 & LUKVLI9 & 50 & 51 & 50 & 2.45 \\
        METHANB8LS & 31 & 31 & 11 & 6.29 & METHANL8LS & 31 & 31 & 11 & 6.29 \\
        MOREBV & 50 & 50 & 3 & 2.96 & NCB20B & 50 & 50 & 20 & 12.78 \\
        NCB20 & 60 & 51 & 30 & 12.75 & NONDQUAR & 50 & 50 & 3 & 2.96 \\
        NONSCOMP & 50 & 50 & 2 & 1.98 & OSCIGRAD & 50 & 50 & 3 & 2.96 \\
        OSCIPATH & 50 & 50 & 2 & 1.98 & PENALTY1 & 50 & 51 & 50 & 1.96 \\
        PENALTY2 & 50 & 100 & 50 & 1.98 & QING & 50 & 50 & 1 & 1.00 \\
        RAYBENDL & 62 & 30 & 4 & 4.00 & SANTALS & 21 & 23 & 4 & 3.52 \\
        SBRYBND & 50 & 50 & 7 & 6.68 & SCHMVETT & 50 & 48 & 3 & 3.00 \\
        SCOSINE & 50 & 49 & 2 & 2.00 & SINEALI & 50 & 50 & 2 & 1.98 \\
        SINQUAD2 & 50 & 50 & 3 & 2.94 & SINROSNB & 50 & 50 & 2 & 1.98 \\
        SPARSINE & 50 & 50 & 6 & 5.58 & SPARSQUR & 50 & 50 & 6 & 5.58 \\
        SSBRYBND & 50 & 50 & 7 & 6.68 & STRTCHDV & 50 & 49 & 2 & 2.00 \\
        TOINTGOR & 50 & 83 & 5 & 1.81 & TOINTGSS & 50 & 48 & 3 & 3.00 \\
        TOINTPSP & 50 & 83 & 5 & 1.81 & TOINTQOR & 50 & 83 & 5 & 1.81 \\
        TQUARTIC & 50 & 50 & 2 & 1.98 & TRIDIA & 50 & 50 & 2 & 1.98 \\
        VARDIM & 50 & 52 & 50 & 2.88 & VAREIGVL & 51 & 51 & 50 & 13.88 \\
        YAO & 52 & 52 & 1 & 1.00 & & & & & \\
        \bottomrule
        % \Xcline{1-10}{0.8pt}
    \end{tabular}
\end{table}

\section{Performance of \texttt{UPOQA} Using Unstructured Trust Regions}\label{sec:non_struct}

We also compared the performance of \texttt{UPOQA} with and without structured trust regions. When structured trust regions are not employed, all the radii $\Delta_{k,1},\ldots,\Delta_{k,q}$ remain identical, and the trust-region subproblem \eqref{eq:PSTrustRegionProb} reduces to an optimization problem within a spherical region. We solve it using the truncated conjugate gradient method with a boundary improvement step, as implemented in \texttt{COBYQA} \cite{COBYQA}. This variant of \texttt{UPOQA} is labeled \texttt{UPOQA (non-struct)}. 

\begin{figure}[ht]
    \centering
    \subfloat[$\varepsilon = 10^{-1}$]{\includegraphics[width = 0.43\textwidth]{./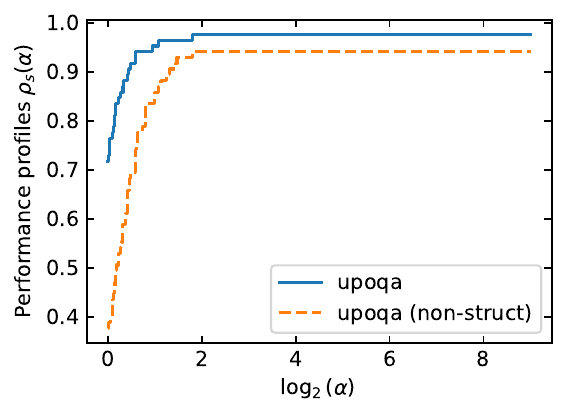}}
    \hfill
    \subfloat[$\varepsilon = 10^{-5}$]{\includegraphics[width = 0.43\textwidth]{./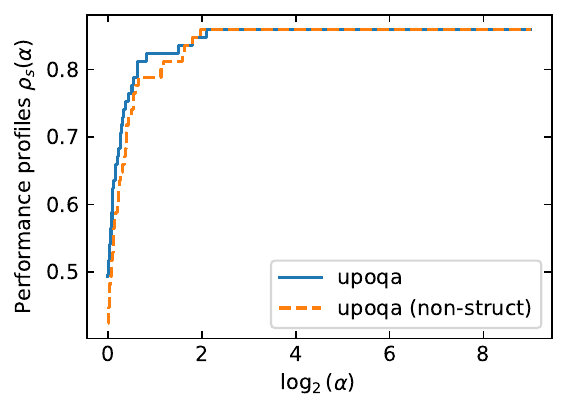}}
    \caption{
        Performance profiles of \texttt{UPOQA} and its unstructured-trust-region variant \texttt{UPOQA (non-struct)} on the test problems from Table \ref{tab:cutest_probs} in Appendix \ref{app:probs}, with convergence tolerances $\varepsilon = 10^{-1}$ and $10^{-5}$.
    }
    \Description{Fully described in the text.}
    \label{fig:non-struct_vs_struct}
\end{figure}

Figure \ref{fig:non-struct_vs_struct} presents performance profiles comparing both algorithms under convergence tolerances of $\varepsilon = 10^{-1}$ and $\varepsilon = 10^{-5}$. For reference, \texttt{UPOQA (single)}, \texttt{L-BFGS-B (ffd)}, and \texttt{NEWUOA} were included in the profile computations to determine the least function values and minimum evaluation numbers required for convergence, though their curves are omitted for clarity.  At $\varepsilon = 10^{-1}$, \texttt{UPOQA} significantly outperforms \texttt{UPOQA (non-struct)}. With structured trust regions, \texttt{UPOQA} achieves the fastest convergence on 71.8\% of the problems, compared to only 36.5\% for \texttt{UPOQA (non-struct)}. Note that the sum of these proportions exceeds 100\% because both solvers require identical evaluation numbers to converge on a small subset of problems. Additionally, even after exhausting the evaluation budget, \texttt{UPOQA (non-struct)} exhibits a 3.5\% lower success rate than \texttt{UPOQA}. At $\varepsilon = 10^{-5}$, the performance gap between the two algorithms narrows considerably, with their profile curves becoming closely matched. Nevertheless, \texttt{UPOQA} maintains a slight speed advantage, converging first on 48.2\% of the problems versus 42.4\% for \texttt{UPOQA (non-struct)}.

These results indicate that \texttt{UPOQA} with structured trust regions maintains a measurable advantage. This benefit is primarily found during the initial optimization phase or when only low-accuracy solutions are required. When high-accuracy solutions are needed, the \texttt{UPOQA (non-struct)} algorithm remains competitive.

\end{document}